\documentclass[a4paper,10pt]{article}

\usepackage{amsmath}
\usepackage{amssymb}
\usepackage{amsthm}
\usepackage{mathrsfs}
\usepackage{color}
\usepackage[dvips]{graphicx}
\usepackage[all]{xy}

\theoremstyle{plain}
\newtheorem{them}{Theorem}[section]
\newtheorem{lemma}[them]{Lemma}
\newtheorem{prop}[them]{Proposition}
\newtheorem{coro}[them]{Corollary}
\theoremstyle{definition}
\newtheorem{defi}[them]{Definition}
\newtheorem{exam}[them]{Example}
\newtheorem{rema}[them]{Remark}
\newtheorem{conv}[them]{Convention}

\newtheorem*{mthem}{Main Theorem}

\newcommand{\lmap}[3]{#1:#2 \longrightarrow #3}
\newcommand{\map}[3]{#1:#2 \rightarrow #3}

\newcommand{\nn}[2]{\overline{N_{#1}}(\mathcal{#2})}
\newcommand{\Hom}{\mathrm{Hom}}
\newcommand{\colim}{\mathop{\mathrm{colim}}}

\begin{document}

\title{The Euler characteristics of categories and the barycentric subdivision}
\author{Kazunori Noguchi \thanks{noguchi@math.shinshu-u.ac.jp}}
\date{}
\maketitle
\begin{abstract}
 We prove the $L^2$-Euler characteristic of small categories introduced by \cite{FLS} is invariant under the barycentric subdivision only for finite acyclic categories. We also extend the definition of the $L^2$-Euler characteristic and prove our extended $L^2$-Euler characteristic is invariant under the barycentric subdivision for a wider class of finite categories. 
\end{abstract}

\tableofcontents

\footnote[0]{Key words and phrases. The Euler characteristics of categories, non-degenerate nerves, acyclic categories, the subdivision of small categories \\ 2010 Mathematics Subject Classification :  18G30 }

\thispagestyle{empty}

\section{Introduction}

Euler characteristics are defined for many mathematical objects, for example, cell complexes, manifolds, varieties, graphs, and so on. But the most basic one is the Euler characteristic for simplicial complexes which is defined by the alternating sum of the number of faces. Rota defined the Euler characteristic for finite posets \cite{Rot64}. The relation between the Euler characteristic of simplicial complexes and the one of posets is described by the following diagram
$$\xymatrix{
{\displaystyle \textbf{Finite posets} } \ar[dr]_{\chi_{\text{Rota}}} \ar[rr]^(0.40){\text{order complex}}&& { \displaystyle \textbf{Finite simplicial complexes} }\ar[dl]^{\chi}\\
&\mathbb{Z}
}$$
Here, the order complex of a finite poset $P$ is an abstract simplicial complex having totally ordered $(n+1)$-subsets of $P$ as its $n$-simplices.

Leinster extended Rota's theory. He defined the Euler characteristic $\chi_L$ for finite categories which satisfy certain conditions, including finite posets, finite groups, orbifolds, directed graphs and so on  \cite{Leia}. At present, we have various invariants of categories, the \textit{the series Euler characteristic} $\chi_{\sum}$ \cite{Leib}, \textit{the $L^2$-Euler characteristic} $\chi^{(2)}$ \cite{FLS}, \textit{the $L^2$-Betti numbers of discrete measured groupoids} \cite{Sau}, \textit{the Euler characteristic of $\mathbb{N}$-filtered acyclic category} $\chi_{\mathrm{fil}}$ \cite{Nog11}, \textit{the cardinality of categories} \cite{BD} and so on. In this paper, we investigate the four Euler characteristics of categories from the view point of the barycentric subdivision of categories.

First of all, let us review the four Euler characteristics of categories.

Leinster's Euler characteristic $\chi_L$ and the series Euler characteristic $\chi_{\sum}$ are defined for finite categories satisfying certain conditions. When a finite category $\mathcal{I}$ has a \textit{M\"obius inversion}, they coincide $\chi_L(\mathcal{I})=\chi_{\sum}(\mathcal{I})$. Here, a finite category $\mathcal{I}$ equipped with the set of objects $\mathrm{Ob}(\mathcal{I})=\{x_1,\dots,x_n\}$ has a M\"obius inversion if the matrix $Z_{\mathcal{I}}=(\# \Hom_{\mathcal{I}}(x_i,x_j))_{i,j}$ has the inverse matrix. But in the out of this class, all the cases occur: these two Euler characteristics take same values, they take different values, one is defined but the other is not, both of them are not defined.

The $L^2$-Euler characteristic is defined not only for finite categories but also infinite categories satisfying a certain condition. For a finite, free, skeletal EI-category $\mathcal{I}$, Leinster's Euler characteristic and the $L^2$-Euler characteristic coincide, $\chi_L(\mathcal{I})=\chi^{(2)}(\mathcal{I})$. Here, an \textit{EI-category} is a small category whose endomorphisms are isomorphisms. A small category $\mathcal{J}$ is \textit{free} if the left $\mathrm{Aut}(y)$-action on $\Hom_{\mathcal{J}}(x,y)$ is free for any objects $x,y$ of $\mathcal{J}$. Since the $L^2$-Euler characteristic is suitable for EI-condition, for a finite category $\mathcal{I}$ which is not an EI-category, it often takes different values from $\chi_L(\mathcal{I})$ and $\chi_{\sum}(\mathcal{I})$. For instance, let $M=\{0,1\}$ be the commutative monoid where 0 is the unit element and $1+1=1$. Then, we obtain $\chi^{(2)}(M)=0$ and $\chi_L(M)=\chi_{\sum}(M)=\frac{1}{2}$.

$\chi_{\mathrm{fil}}$ is the Euler characteristic for $\mathbb{N}$-filtered acyclic category. An \textit{acyclic category} is a small category whose endomorphisms and invertible morphisms are only identity morphisms. An \textit{$\mathbb{N}$-filtered acyclic category} is a pair $(\mathcal{A},\mu)$ of an acyclic category $\mathcal{A}$ and a filtration $\mu$, called an \textit{$\mathbb{N}$-filtration}, on the set of objects in $\mathcal{A}$. For a finite acyclic category $\mathcal{A}$, these four Euler characteristics coincide
$$\chi_{L}(\mathcal{A})=\chi_{\sum}(\mathcal{A})=\chi^{(2)}(\mathcal{A})=\chi_{\mathrm{fil}}(\mathcal{A},\mu)$$
for any $\mathbb{N}$-filtration $\mu$ of $\mathcal{A}$.

Moreover, $\chi_{\mathrm{fil}}$ is suitable for the barycentric subdivision of small categories. The \textit{barycentric subdivision} of small categories is a functor from the category of small categories to itself $$\lmap{\mathrm{Sd}}{\textbf{Small categories}}{\textbf{Small categories}}.$$ For a small category $\mathcal{J}$, $\mathrm{Sd}(\mathcal{J})$ is an acyclic category and its objects are the non-degenerate chains of morphisms of $\mathcal{J}$. In addition, $\mathrm{Sd}(\mathcal{J})$ has naturally an $\mathbb{N}$-filtration. Since the Euler characteristic of simplicial complexes is invariant under the barycentric subdivision, we expect a categorical analogue of this fact would hold for a certain class of small categories. But we have to note that $\mathrm{Sd}(\mathcal{J})$ is often infinite even if $\mathcal{J}$ is finite. $\mathrm{Sd}(\mathcal{J})$ is finite if and only if $\mathcal{J}$ is a finite acyclic category. So we can not always use Leinster's Euler characteristic and the series Euler characteristic for this purpose. In \cite{Nog11}, the following theorem was proven.
 
\begin{them}
Let $\mathcal{I}$ be a finite category for which the series Euler characteristic can be defined. Then, $\chi_{\mathrm{fil}}(\mathrm{Sd}(\mathcal{I}), L)$ is also defined and they coincide
$$\chi_{\Sigma}(\mathcal{I})=\chi_{\mathrm{fil}}(\mathrm{Sd}(\mathcal{I}), L),$$
that is, we have the following commutative diagram
$$\xymatrix{
 \chi_{\sum}\text{-}\mathbf{categories} \ar[dr]_{\chi_{\sum}} \ar[rr]^(0.45){\mathrm{Sd}}&&  \chi_{\mathrm{fil}}\text{-}\mathbf{categories}   \ar[dl]^{\chi_{\mathrm{fil}}}\\
&\mathbb{Q},
}$$
where $\chi_{\sum} \text{-}\mathbf{categories}$ denotes the category of finite categories for which the series Euler  characteristic can be defined and $\chi_{\text{fil}}\text{-}\mathbf{categories}$ denotes the category of $\mathbb{N}$-filtered acyclic categories for which its Euler characteristic can be defined. 
\end{them}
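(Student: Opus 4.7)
The plan is to unfold the two definitions and match them via the canonical identification between the objects of $\mathrm{Sd}(\mathcal{I})$ and the non-degenerate simplices of the nerve of $\mathcal{I}$. Recall that, under the hypothesis that $\mathcal{I}$ is a $\chi_\Sigma$-category, $\chi_\Sigma(\mathcal{I})$ is obtained as the (appropriately summable) value at $t=-1$ of the power series $\sum_{n\geq 0}\#\overline{N}_n(\mathcal{I})\,t^n$, where $\overline{N}_n(\mathcal{I})$ is the set of non-degenerate $n$-chains of $\mathcal{I}$. The filtered Euler characteristic $\chi_{\mathrm{fil}}(\mathcal{A},\mu)$ of an $\mathbb{N}$-filtered acyclic category is, analogously, a summability-regularized alternating sum indexed by the filtration strata of $\mathcal{A}$. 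The whole strategy is to show that the natural filtration $L$ on $\mathrm{Sd}(\mathcal{I})$ presents $\chi_{\mathrm{fil}}$ by the very same formal power series.

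First I would make the identification explicit: an object of $\mathrm{Sd}(\mathcal{I})$ is by definition a non-degenerate chain $x_0\to x_1\to\cdots\to x_n$ in $\mathcal{I}$, and the natural $\mathbb{N}$-filtration $L$ assigns to such an object its length $n$. Since $\mathcal{I}$ is finite, each stratum $L^{-1}(n)$ is a finite set of cardinality $\#\overline{N}_n(\mathcal{I})$, while $\mathrm{Sd}(\mathcal{I})$ as a whole may be infinite (this is exactly why one needs $\chi_{\mathrm{fil}}$ rather than $\chi_L$ or $\chi_\Sigma$ directly on $\mathrm{Sd}(\mathcal{I})$). Acyclicity of $\mathrm{Sd}(\mathcal{I})$ is immediate from the fact that non-identity morphisms in $\mathrm{Sd}(\mathcal{I})$ strictly decrease chain length, so the filtered setup applies.

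Next I would unpack the definition of $\chi_{\mathrm{fil}}(\mathrm{Sd}(\mathcal{I}), L)$ on this strata decomposition. Because $L$ records exactly the chain-length grading, the generating function that controls $\chi_{\mathrm{fil}}(\mathrm{Sd}(\mathcal{I}), L)$ is literally $\sum_{n\geq 0}\#\overline{N}_n(\mathcal{I})\,t^n$, i.e.\ the same series that defines $\chi_\Sigma(\mathcal{I})$. This gives the claimed equality in the formal sense, and simultaneously shows that the summability hypothesis needed to define $\chi_{\mathrm{fil}}(\mathrm{Sd}(\mathcal{I}), L)$ is implied by the one guaranteeing the existence of $\chi_\Sigma(\mathcal{I})$. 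Finally, the functoriality of $\mathrm{Sd}$ upgrades this pointwise equality to the commutative diagram in the statement.

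The main obstacle is the convergence/summability bookkeeping: one must verify that the regularization used in the definition of $\chi_\Sigma$ and that used in the definition of $\chi_{\mathrm{fil}}$ are actually the same when applied to the identical series, so that existence of one forces existence of the other rather than merely agreeing when both happen to exist. Once this is pinned down — essentially by tracing through the definitions and using acyclicity of $\mathrm{Sd}(\mathcal{I})$ plus finiteness of each stratum $L^{-1}(n)$ — the theorem reduces to the trivial observation that the two Euler characteristics are two names for the value at $t=-1$ of one and the same power series.
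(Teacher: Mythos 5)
There is a genuine gap at the central step, and it is exactly the nontrivial content of the theorem (which this paper does not reprove but quotes from \cite{Nog11}). By definition, $\chi_{\mathrm{fil}}(\mathrm{Sd}(\mathcal{I}),L)$ is \emph{not} controlled ``literally'' by $\sum_{n\ge 0}\#\overline{N_n}(\mathcal{I})\,t^n$: its defining series is
$$f_{\chi}(\mathrm{Sd}(\mathcal{I}),L)(t)=\sum_{n\ge 0}(-1)^n\Bigl(\sum_{i=0}^{n}(-1)^i\,\#\overline{N_i}(\mathrm{Sd}(\mathcal{I}))_n\Bigr)t^n,$$
whose coefficients count non-degenerate $i$-chains of \emph{morphisms of} $\mathrm{Sd}(\mathcal{I})$ (flags of non-degenerate chains of $\mathcal{I}$), graded by the maximal $L$-value along the flag — not objects of $\mathrm{Sd}(\mathcal{I})$ graded by $L$. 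Identifying this series with $f_{\mathcal{I}}(t)=\sum_{n}\#\overline{N_n}(\mathcal{I})t^n$ requires, for every object $\mathbf{g}$ of $\mathrm{Sd}(\mathcal{I})$, the identity $\sum_{i=0}^{L(\mathbf{g})}(-1)^i\,\#\overline{N_i}(\mathrm{Sd}(\mathcal{I}))_{\mathbf{g}}=(-1)^{L(\mathbf{g})}$, which is Theorem 4.7 of \cite{Nog11}; in the present paper the same fact reappears homologically as the acyclicity of the complex $\{C^{(n)}_k,D_k\}$ attached to an equivalence $n$-simplex. This identity is not a definitional unpacking: when $\mathcal{I}$ has isomorphisms or non-identity endomorphisms, the ``simplex below $\mathbf{g}$'' inside $\mathrm{Sd}(\mathcal{I})$ is not the barycentric subdivision of a genuine $n$-simplex — some faces are excluded (those hitting identities) and others are identified (by the equivalence relation on order-preserving maps) — so the alternating count over flags needs the equivalence-simplex argument (or an equivalent one), which your proposal omits.

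Once that identity is supplied, the remainder of your outline is sound: the two formal power series coincide, so the $t=-1$ rational-function regularization is applied to one and the same series, and existence of $\chi_{\sum}(\mathcal{I})$ gives existence of $\chi_{\mathrm{fil}}(\mathrm{Sd}(\mathcal{I}),L)$ with the same value. Two smaller corrections: non-identity morphisms of $\mathrm{Sd}(\mathcal{I})$ strictly \emph{increase} length (they are represented by order-preserving injections $[q_X]\to[q_Y]$), which is precisely why $L$ is an $\mathbb{N}$-filtration; and the definition of $\chi_{\mathrm{fil}}$ also asks that each $\overline{N_i}(\mathrm{Sd}(\mathcal{I}))_n$ be finite and empty for $n<i$, which should be checked (it follows from finiteness of $\mathcal{I}$ together with strict monotonicity of $L$ along non-identity morphisms), since $\mathrm{Sd}(\mathcal{I})$ itself is typically infinite.
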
Here, $L$ is the $\mathbb{N}$-filtration of $\mathrm{Sd}(\mathcal{I})$ which is defined by taking the length of chains.

Since the $L^2$-Euler characteristic is defined for a certain  class of infinite categories, we can consider a similar problem; is the $L^2$-Euler characteristic invariant under the barycentric subdivision? In this paper, the following theorem is obtained.
\begin{them}
For a small category $\mathcal{I}$, $\chi^{(2)}(\mathrm{Sd}(\mathcal{I})^{\mathrm{op}})$ exists if and only if $\mathcal{I}$ is finite acyclic, in which case, we have $$\chi^{(2)}(\mathrm{Sd}(\mathcal{I})^{\mathrm{op}})=\chi^{(2)}(\mathcal{I}).$$
\end{them}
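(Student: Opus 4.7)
The plan is to prove the two directions of the biconditional separately.

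\emph{Sufficiency.} If $\mathcal{I}$ is finite acyclic, the introduction records that both $\mathrm{Sd}(\mathcal{I})$ and $\mathrm{Sd}(\mathcal{I})^{\mathrm{op}}$ are finite acyclic, and that on finite acyclic categories all four Euler characteristics $\chi_L,\chi_{\sum},\chi^{(2)},\chi_{\mathrm{fil}}$ coincide, so $\chi^{(2)}(\mathrm{Sd}(\mathcal{I})^{\mathrm{op}})$ exists. To identify its value, I would observe that $\mathrm{Sd}(\mathcal{I}^{\mathrm{op}})=\mathrm{Sd}(\mathcal{I})^{\mathrm{op}}$ and apply the preceding theorem to $\mathcal{I}^{\mathrm{op}}$ (finite acyclic, hence admitting $\chi_{\sum}$) to obtain
$$\chi^{(2)}(\mathrm{Sd}(\mathcal{I})^{\mathrm{op}})=\chi_{\mathrm{fil}}(\mathrm{Sd}(\mathcal{I})^{\mathrm{op}},L)=\chi_{\sum}(\mathcal{I}^{\mathrm{op}})=\chi^{(2)}(\mathcal{I}^{\mathrm{op}}).$$
Finally $\chi^{(2)}(\mathcal{I}^{\mathrm{op}})=\chi^{(2)}(\mathcal{I})$, since for a finite acyclic category both sides equal the ordinary Euler characteristic of the common classifying space $B\mathcal{I}\cong B\mathcal{I}^{\mathrm{op}}$.

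\emph{Necessity.} I would argue the contrapositive. If $\mathcal{I}$ is not finite acyclic, the introduction records that $\mathrm{Sd}(\mathcal{I})$, hence $\mathrm{Sd}(\mathcal{I})^{\mathrm{op}}$, is infinite: either $\mathcal{I}$ already has infinitely many objects or morphisms, so $\mathrm{Sd}(\mathcal{I})$ inherits infinitely many chain-objects; or $\mathcal{I}$ is finite but carries a nontrivial endomorphism or isomorphism, producing arbitrarily long non-degenerate chains. Since $\mathrm{Sd}(\mathcal{I})^{\mathrm{op}}$ is acyclic, every automorphism group is trivial, so the von-Neumann-dimension machinery underlying the FLS definition collapses to ordinary complex dimensions; the finite-$L^2$-type hypothesis needed to define $\chi^{(2)}$ then forces the set of objects (the degree-zero cells of the nerve) to be finite, which is the desired contradiction.

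\emph{Main obstacle.} The sufficiency direction is essentially bookkeeping once one recognizes $\mathrm{Sd}(\mathcal{I}^{\mathrm{op}})=\mathrm{Sd}(\mathcal{I})^{\mathrm{op}}$ and invokes the preceding theorem together with the invariance of $\chi^{(2)}$ under the opposite for finite acyclic categories. The real work lies in the necessity direction: one must unwind FLS's definition carefully enough through an acyclic category to see that, with all automorphism groups trivial, the $L^2$-finite-type condition reduces to ordinary finiteness of the object set, so that infinitely many objects genuinely obstruct the existence of $\chi^{(2)}$. Once this reduction is in place, the case analysis yielding $|\mathrm{Ob}(\mathrm{Sd}(\mathcal{I})^{\mathrm{op}})|=\infty$ from the failure of "finite acyclic" closes the argument.
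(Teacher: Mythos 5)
There is a genuine gap, and it sits exactly where you locate "the real work." In the necessity direction you claim that, because $\mathrm{Sd}(\mathcal{I})^{\mathrm{op}}$ is acyclic with trivial automorphism groups, the type-$(L^2)$ condition "reduces to ordinary finiteness of the object set." That reduction is not justified and is false in general: the condition is $\sum_{[x]} h^{(2)}(S_x P_*\otimes_{\mathbb{C}[x]}\mathcal{N}(x))<\infty$, and individual objects can contribute $0$, so an infinite acyclic category may perfectly well be of type $(L^2)$ (this is the whole point of the FLS definition; e.g.\ for the infinite chain poset $\mathbb{N}$ every object has a nonempty upper set, $S_x\underline{\mathbb{C}}=0$, the higher contributions vanish, and $h^{(2)}$ is finite). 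What actually forces divergence for $\mathrm{Sd}(\mathcal{I})^{\mathrm{op}}$ is that every object contributes exactly $1$: the paper proves this by constructing the explicit projective resolution $P(\mathrm{Sd}(\mathcal{I}))_*$ indexed by the non-degenerate nerve (Proposition \ref{PR}, whose exactness needs the equivalence-$n$-simplex complexes of Proposition \ref{DMT}) and by computing the splitting functor on representable projectives over an acyclic category (Lemma \ref{AS}), which shows $S_{\mathbf{f}}P(\mathrm{Sd}(\mathcal{I}))_*$ is a single $\mathbb{C}$ in degree $L(\mathbf{f})$, whence $h^{(2)}(\mathrm{Sd}(\mathcal{I})^{\mathrm{op}})=\sum_n\#\overline{N_n}(\mathcal{I})$ and Lemma \ref{iff} gives the "only if." Your proposal has no substitute for this computation, so the necessity direction does not close.

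The sufficiency direction also uses the identity $\mathrm{Sd}(\mathcal{I}^{\mathrm{op}})=\mathrm{Sd}(\mathcal{I})^{\mathrm{op}}$, which is false: morphisms in $\mathrm{Sd}$ go from shorter chains to longer ones regardless of the orientation of $\mathcal{I}$, so $\mathrm{Sd}(\mathcal{I}^{\mathrm{op}})\cong\mathrm{Sd}(\mathcal{I})$, not its opposite (already for $\mathcal{I}=[1]$ one gets a cospan versus a span). This part is repairable without the paper's machinery: for a finite acyclic $\mathcal{A}$ one has $\chi^{(2)}(\mathcal{A}^{\mathrm{op}})=\chi_L(\mathcal{A}^{\mathrm{op}})=\chi_L(\mathcal{A})$ (FLS Lemma 7.3 plus transpose-invariance of $Z_{\mathcal{A}}$), and then Proposition \ref{Leinster} and FLS again give the value $\chi^{(2)}(\mathcal{I})$; so with that fix the "if" direction is a legitimate softer route than the paper's resolution computation. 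But the value identification is the easy half; the existence criterion is the theorem's content, and there your argument fails.
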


Thus, the $L^2$-Euler characteristic is invariant under the barycentric subdivision only for finite acyclic  categories. But $\mathrm{Sd}(\mathcal{A})$ is a finite category for a finite acyclic category $\mathcal{A}$ and $\chi_L(\mathrm{Sd}(\mathcal{A}))$ and $\chi_{\sum}(\mathrm{Sd}(\mathcal{A}))$ exist. Furthermore, we obtain 
$$\chi_L(\mathcal{A})=\chi_L(\mathrm{Sd}(\mathcal{A})), \ \chi_{\sum}(\mathcal{A})=\chi_{\sum}(\mathrm{Sd}(\mathcal{A})).$$
And for any $\mathbb{N}$-filtration $\mu$ of $\mathcal{A}$ we obtain
$$\chi_{\mathrm{fil}}(\mathcal{A},\mu)=\chi_{\mathrm{fil}}(\mathrm{Sd}(\mathcal{A}), L ).$$ The $L^2$-Euler characteristic is defined for certain class of infinite categories but it is not suitable for the categories of the form $\mathrm{Sd}(\mathcal{I})$ since it is invariant under the barycentric subdivision only for finite acyclic categories and the operation Sd makes the von Neumann dimension trivial and generates many objects.

We introduce an extension the $L^2$-Euler characteristic, called \textit{the extended $L^2$-Euler characteristic} $\chi^{(2)}_{\mathrm{ex}}$, which is suitable for the categories after taking the functor $\mathrm{Sd}$. Then, we obtain the following theorem.

\begin{mthem}
Suppose $\mathcal{I}$ is a finite category. Then, the extended $L^2$-Euler characteristic  $\chi^{(2)}_{\mathrm{ex}}(\mathrm{Sd}(\mathcal{I})^{\mathrm{op}})$ is defined if and only if its series Euler characteristic $\chi_{\sum}(\mathcal{I})$ exists, in that case, we obtain
$$\chi_{\sum}(\mathcal{I})=\chi^{(2)}_{\mathrm{ex}}(\mathrm{Sd}(\mathcal{I})^{\mathrm{op}}).$$
\end{mthem}


 We note that the $\mathbb{N}$-filtration $L$ appears on the way to prove our main theorem. The $L^2$-Euler characteristic and the Euler characteristic of $\mathbb{N}$-filtered acyclic categories were independently found, but they are essentially same for categories for categories of the form $\mathrm{Sd}(\mathcal{I})$. When we compute $\chi^{(2)}(\mathrm{Sd}(\mathcal{I})^{\mathrm{op}})$ and $\chi^{(2)}_{\mathrm{ex}}(\mathrm{Sd}(\mathcal{I})^{\mathrm{op}})$, the definition of the $L^2$-Euler characteristic requires us to have a projective resolution of the constant functor $\underline{\mathbb{C}}$ in the functor category $\mathrm{Func}(\mathrm{Sd}(\mathcal{I}), \mathrm{\mathbb{C}\text{-}vect})$. The following is a projective resolution of $\underline{\mathbb{C}}$ we will construct
$$\xymatrix{\dots\ar[r]^{\partial_2}&\displaystyle  \bigoplus_{\mathbf{f_1} \in \overline{N_1}(\mathcal{I})} P_{\mathbf{f_1}}\ar[r]^{\partial_1}&\displaystyle  \bigoplus_{\mathbf{f_0} \in \overline{N_0}(\mathcal{I})} P_{\mathbf{f_0}}\ar[r]^(0.65){\partial_0}&\underline{\mathbb{C}}\ar[r]&0\ar[r]&\dots 
}$$
where $P_{\mathbf{f_n}}$ is a projective object corresponding each $\mathbf{f_n}$ of $\overline{N_n}(\mathcal{I})$ (Note that $\mathbf{f_n}$ is an object in $\mathrm{Sd}(\mathcal{I})$). Thus, this projective resolution gives the $\mathbb{N}$-filtration $L$ on $\mathrm{Sd}(\mathcal{I})$ and conversely $L$ gives the projective resolution.  Furthermore, on the way to compute $$\chi_{\sum}(\mathcal{I}), \chi^{(2)}(\mathrm{Sd}(\mathcal{I})^{\mathrm{op}}), \chi^{(2)}_{\mathrm{ex}}(\mathrm{Sd}(\mathcal{I})^{\mathrm{op}}), \chi_{\mathrm{fil}}(\mathrm{Sd}(\mathcal{I}),L),$$
the power series $$\sum_{n=0}^{\infty} \# \overline{N_n}(\mathcal{I})z^n $$
always appears where $\overline{N_n}(\mathcal{I})$ is the set of non-degenerate chains of morphisms of $\mathcal{I}$. $\chi^{(2)}_{\mathrm{ex}}(\mathrm{Sd}(\mathcal{I})^{\mathrm{op}})$ and $\chi_{\mathrm{fil}}(\mathrm{Sd}(\mathcal{I}),L)$ are just the series Euler characteristic $\chi_{\sum}(\mathcal{I})$ and it can be indicated that the series is very important to consider the Euler characteristic of categories.




This paper is organized as follows.

 In section 2, we give some notations and basic definitions. And we recall the homological algebra over a functor category, which is used in the definition of the $L^2$-Euler characteristic.

In section \ref{L^2}, we prove the four Euler characteristics of categories mentioned above are invariant under the barycentric subdivision for finite acyclic categories. To prove the exactness of the sequence above $$\xymatrix{\dots\ar[r]^{\partial_2}&\displaystyle  \bigoplus_{\mathbf{f_1} \in \overline{N_1}(\mathcal{I})} P_{\mathbf{f_1}}\ar[r]^{\partial_1}&\displaystyle  \bigoplus_{\mathbf{f_0} \in \overline{N_0}(\mathcal{I})} P_{\mathbf{f_0}}\ar[r]^(0.65){\partial_0}&\underline{\mathbb{C}}\ar[r]&0\ar[r]&\dots 
}$$ we introduce the notion of an equivalence $n$-simplex and we prove it forms an acyclic chain complex. 
Finally, we extended the domain of the definition of the $L^2$-Euler characteristic and give a proof of our main theorem.

\section{Preliminaries}

\subsection{Notations}\label{Notations}

\begin{enumerate}

\item Natural numbers mean non-negative integers. 

\item For a natural number $n$, let $[n]=\{0,1,\dots, n\}$ equipped with usual ordering.

\item Let $X$ be a set. Then, $\mathbb{C}[X]$ denotes the free $\mathbb{C}$-vector space generated by $X$.

\item Let $X$ be a finite set. Then, we denote the number of elements of $X$ by $\# X$. 

\item Let $\map{\varphi}{\mathcal{J}}{\mathcal{I}}$ be a functor between small categories and let $i$ be an object of $\mathcal{I}$. Then, the category \textit{$\varphi$-under $i$} is denoted by $(\varphi \downarrow i)$ and the category \textit{$\varphi$-over $i$} is denoted by $(i \downarrow \varphi )$.

\item A \textit{discrete category} $X$ is a category consists of objects and identity morphisms. In particular, if a discrete category has exactly one object, it is called \textit{one-point category}, denoted by $*$.

\item Suppose $\mathcal{J}$ is a small category and $\mathcal{C}$ is a category. The \textit{functor category} $\mathrm{Func}(\mathcal{J}, \mathcal{C})$ consists of functors from $\mathcal{J}$ to $\mathcal{C}$ as its objects and natural transformations between them as its morphisms. Sometimes we simply write it $\mathcal{C}^\mathcal{J}$. 


\end{enumerate}

\subsection{Basic definitions}

In this subsection, we recall basic definitions.

\begin{defi}\label{def}
A small category $\mathcal{A}$ is an \textit{acyclic category} if all the endomorphisms are only identity morphisms and if there exists an arrow $\map{f}{X}{Y}$ such that $X\not = Y$, then there does not exist an arrow $\map{g}{Y}{X}$. Define an order on the set $\mathrm{Ob}(\mathcal{A})$ of objects of $\mathcal{A}$ by $x\le y$ if there exists a morphism $x\rightarrow y$ Then, $\mathrm{Ob}(\mathcal{A})$ is a poset.
\end{defi}

\begin{defi}
Let $\mathcal{J}$ be a small category. The \textit{nerve} $N_*(\mathcal{J})$ of $\mathcal{J}$ is the simplicial set whose set of $n$-simplices $N_n(\mathcal{J})$ is defined as follows \cite{Qui73}:
\begin{eqnarray*}
N_n( \mathcal{J})&=&\{ (f_1,f_2,\dots, f_n)\mid\text{each $f_i$ and $f_{i+1}$ are composable} \}\\
&=&\mathrm{Ob}(\mathrm{Func}([n,\mathcal{J}]))
\end{eqnarray*}

 The \textit{non-degenerate nerve} of $\mathcal{J}$, denoted by $\nn{*}{J}$, is the $\mathbb{N}$-graded subset of $N_*(\mathcal{J})$ 
and each $\nn{n}{J}$ is defined by the following: 
$$\nn{n}{J}=\{ (f_1,f_2,\dots, f_n) \in N_n( \mathcal{J}) \mid\text{none of $f_i$ is the identity morphism} \}$$
where $\nn{0}{J}$ is defined by $\nn{0}{J}=N_0( \mathcal{J})$. 

For any objects $x$ and $y$ of $\mathcal{J}$, define
$$N_n(\mathcal{J})^x_y=\{ \xymatrix{(x_0\ar[r]^{f_1}&x_1\ar[r]^{f_2}&\dots\ar[r]^{f_n}&x_n)} \in N_n(\mathcal{J}) \mid x_0=x, x_n=y \}$$
and $$N_n(\mathcal{J})_y=\{ \xymatrix{(x_0\ar[r]^{f_1}&x_1\ar[r]^{f_2}&\dots\ar[r]^{f_n}&x_n)} \in N_n(\mathcal{J}) \mid  x_n=y \}.$$
$\overline{N_n}(\mathcal{J})^x_y$ and $\overline{N_n}(\mathcal{J})_y$ are also defined in the same way.

\end{defi}

\begin{defi}
Let $\mathcal{J}$ be a small category. Then, \textit{the barycentric subdivision} $\mathrm{Sd}(\mathcal{J})$ of $\mathcal{J}$ is the small category whose objects are the non-degenerate chains of morphisms in $\mathcal{J}$ and the set of morphisms between $X$ and $Y$ is the quotient set of order-preserving maps $\map{f}{[q_X]}{[q_Y]}$ satisfying $Y\circ f=X$ under the relation defined below. Here, $X$ and $Y$ are regarded as functors from posets $[q_X]$ and $[q_Y]$ to $\mathcal{J}$, respectively. So the condition $Y\circ f=X$ implies the commutativity of the diagram 
$$\xymatrix{
&\mathcal{J}&\\
[q_X]\ar[ru]^X\ar[rr]^f&&[q_Y]\ar[lu]_Y
}$$
in the category of small categories.

The equivalence relation is generated by the following relation: Given order-preserving maps $\map{f,g}{[q_X]}{[q_Y]}$ satisfying $Y\circ f=X, Y\circ g=X$, respectively, define $f\sim g$ if for any $0\le i \le q_X$, $Y(\min\{f(i), g(i)\}\rightarrow \max\{f(i), g(i)\})$ is an identity morphism. Here, $$\min\{f(i), g(i)\}\rightarrow \max\{f(i), g(i)\}$$ is a morphism in $[q_Y]$. The composition in Sd$(\mathcal{J})$ is defined by the composition of order-preserving maps.
\end{defi}

We summarize important properties we will often use. For proofs see \cite{Nog11}.
\begin{enumerate}
\item For a small category $\mathcal{J}$, $\mathrm{Sd}(\mathcal{J})$ is an acyclic category.
\item For a morphism $\map{f}{X}{Y}$ in $\mathrm{Sd}(\mathcal{J})$, $\map{f}{[q_X]}{[q_Y]}$ is an order-preserving injection.
\end{enumerate}

\subsection{Homological algebra over a functor category}\label{Homological}

In this subsection, let us recall the definition and basic properties of the Kan extensions. See \cite{KS06},\cite{ML98}, for more details.

Suppose $\map{\varphi}{\mathcal{J}}{\mathcal{I}}$ is a functor between small categories and $\mathscr{C}$ is a category. Then, $\varphi$ induces a functor $\varphi^*$ by precomposition
$$\xymatrix{\mathrm{Func}(\mathcal{I}, \mathscr{C}) \ar[r]^{\varphi^*}&\mathrm{Func}(\mathcal{J}, \mathscr{C}) \ar@/_1pc/[l]_{\varphi^\ddagger} \ar@/^1pc/[l]^{\varphi^\dagger} .}$$
If $\mathscr{C}$ is closed under all small limits and colimits, $\varphi^*$ has a left and a right adjoint $\varphi^\dagger$ and $ \varphi^\ddagger$, respectively. These functors can be described as follows. For any $\map{\beta}{\mathcal{J}}{\mathscr{C}}$

$$\map{\varphi^\dagger (\beta)}{\mathcal{I}}{\mathscr{C}}, \varphi^\dagger (\beta)(i)=\colim \beta\circ P_i$$
$$\map{\varphi^\ddagger (\beta)}{\mathcal{I}}{\mathscr{C}}, \varphi^\ddagger (\beta)(i)=\lim\beta \circ Q^i$$
where $\map{P_i}{(\varphi\downarrow i)}{\mathcal{J}}$ and $\map{Q^i}{(i \downarrow \varphi)}{\mathcal{J}}$ are the projections.

For a morphism $\map{f}{i}{i'}$ in $\mathcal{I}$, $\varphi^\dagger (\beta)(f)$ and $\varphi^\ddagger (\beta)(f)$ are determined by the universal properties. For morphisms 
$$\xymatrix{\varphi(j)\ar[dr]_{g_1}\ar[rr]^{\varphi(h_1)}&&\varphi(j') \ar[dl]^{g_2}\\ &i&},  \xymatrix{&i\ar[dr]^{g_3} \ar[dl]_{g_4}&\\ \varphi(j)\ar[rr]^{\varphi(h_2)}&&\varphi(j') }$$ 
in $(\varphi \downarrow i)$ and $(i\downarrow \varphi)$, respectively, we obtain the following diagrams

$$\xymatrix{\beta(j)=\beta\circ P_i \left( \map{g_1}{\varphi(j)}{i} \right)\ar[d]^{\beta(h_1)}\ar@/^1pc/[drr]^(0.6){\lambda'(f\circ g_1)}\ar[rr]^(0.65){\lambda(g_1)}&&\colim \beta\circ P_i \ar@{.>}[d]^{\exists ! \varphi^\dagger(\beta)(f)} \\
\beta(j')=\beta\circ P_i \left( \map{g_2}{\varphi(j')}{i} \right)\ar@/_1pc/[urr]_(0.65){\lambda(g_2)}\ar[rr]_(0.65){\lambda'(f\circ g_2)}&&\colim \beta\circ P_{i'}
}$$

$$\xymatrix{\beta(j)=\beta\circ Q^i \left( \map{g_3}{i}{\varphi(j)} \right)\ar[d]^{\beta(h_2)}&&\lim \beta \circ Q^i \ar[ll]_(0.35){\mu(g_3 \circ f)} \ar@{.>}[d]^{\exists ! \varphi^\ddagger(\beta)(f)}\ar@/^1pc/[dll]^(0.4){\mu( g_4 \circ f)} \\
\beta(j')=\beta\circ Q^i \left( \map{g_4}{i}{\varphi(j')} \right)&&\lim \beta\circ Q^{i'} \ar[ll]^(0.35){\mu'( g_4)} \ar@/_1pc/[ull]_(0.35){\mu'(g_3)}
}$$
where $\lambda ,\lambda'$ are the limiting cone of $\colim \beta\circ P$ and $\colim \beta\circ P'$ respectively and $\mu, \mu'$ are the limiting cone of $\colim \beta\circ Q$ and $\colim \beta\circ Q'$ respectively.

Since $\varphi^\dagger$ and $\varphi^\ddagger$ are a left and a right adjoint of $\varphi$, respectively, we have the following bijections
 $$\Hom_{\mathrm{Func}(\mathcal{I}, \mathscr{C})}(\varphi^\dagger(\beta), \alpha) \cong \Hom_{\mathrm{Func}(\mathcal{J}, \mathscr{C})}(\beta, \varphi_*(\alpha))$$
 $$\Hom_{\mathrm{Func}(\mathcal{J}, \mathscr{C})}(\varphi_*(\alpha), \beta) \cong \Hom_{\mathrm{Func}(\mathcal{I}, \mathscr{C})}(\alpha, \varphi^\ddagger(\beta)).$$

Recall that for an Abelian category $\mathscr{A}$, the functor category $\mathrm{Func}(\mathcal{I}, \mathscr{A})$ is an Abelian category. The following fact is well-known.

\begin{lemma}\label{projective}
Suppose $\map{\varphi}{\mathcal{J}}{\mathcal{I}}$ is a functor between small categories and $\mathscr{A}$ is an Abelian category closed under all small colimits. If $P$ is a projective object in $\mathrm{Func}(\mathcal{J},\mathscr{A})$, then $\varphi^\dagger (P)$ is projective in $\mathrm{Func}(\mathcal{I},\mathscr{A})$.
\end{lemma}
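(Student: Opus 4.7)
The plan is to invoke the standard fact that a left adjoint of an exact functor preserves projective objects, applied to the adjoint pair $(\varphi^\dagger, \varphi^*)$ already recalled in this subsection.

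First I would verify that the precomposition functor $\varphi^*$ is exact. This is almost tautological: in a functor category into an Abelian category, kernels, cokernels, and finite (co)limits are computed object-wise on the index category, so precomposition with $\varphi$ preserves them. Concretely, for a short exact sequence $0 \to \alpha_1 \to \alpha_2 \to \alpha_3 \to 0$ in $\mathrm{Func}(\mathcal{I}, \mathscr{A})$, exactness at each object $j \in \mathcal{J}$ of the image $0 \to \alpha_1\circ\varphi \to \alpha_2\circ\varphi \to \alpha_3\circ\varphi \to 0$ follows from exactness of the original sequence at the object $\varphi(j)$.

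Next, I would use the adjunction
\[
\Hom_{\mathrm{Func}(\mathcal{I}, \mathscr{A})}\!\bigl(\varphi^\dagger(P), \alpha\bigr) \cong \Hom_{\mathrm{Func}(\mathcal{J}, \mathscr{A})}\!\bigl(P, \varphi^*(\alpha)\bigr),
\]
which is natural in $\alpha$. Given a short exact sequence $0 \to \alpha_1 \to \alpha_2 \to \alpha_3 \to 0$ in $\mathrm{Func}(\mathcal{I}, \mathscr{A})$, applying $\varphi^*$ yields a short exact sequence in $\mathrm{Func}(\mathcal{J}, \mathscr{A})$ by the previous step, and then applying $\Hom(P, -)$ yields an exact sequence of Abelian groups because $P$ is projective. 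Via the natural isomorphism above, this exhibits $\Hom_{\mathrm{Func}(\mathcal{I}, \mathscr{A})}(\varphi^\dagger(P), -)$ as an exact functor, which is exactly the statement that $\varphi^\dagger(P)$ is projective in $\mathrm{Func}(\mathcal{I}, \mathscr{A})$.

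There is no real obstacle here; the argument is formal once the two ingredients (exactness of $\varphi^*$ and the adjunction) are in place. The only point that deserves care is checking that the adjunction recalled in the subsection is actually available in this setting, i.e.\ that existence of $\varphi^\dagger$ truly requires only the small colimits assumed in the hypothesis (the pointwise formula $\varphi^\dagger(\beta)(i) = \colim \beta \circ P_i$ makes sense because each comma category $(\varphi \downarrow i)$ is small). Once that is noted, the proof reduces to the two displayed lines above.
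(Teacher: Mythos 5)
Your argument is correct: $\varphi^*$ is exact (indeed it suffices that it preserves epimorphisms, which is clear since epimorphisms in $\mathrm{Func}(-,\mathscr{A})$ are detected object-wise), and then the natural isomorphism $\Hom(\varphi^\dagger(P),-)\cong\Hom(P,\varphi^*(-))$ together with projectivity of $P$ shows $\Hom(\varphi^\dagger(P),-)$ preserves epimorphisms. The paper itself gives no proof, stating the lemma as a well-known fact, and your adjunction argument is precisely the standard justification it is alluding to, including the correct observation that the hypothesis on small colimits is exactly what guarantees the pointwise formula for $\varphi^\dagger$ over the small comma categories $(\varphi\downarrow i)$.
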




Let $\mathcal{J}$ be a small category and let $x$ be an object of $\mathcal{J}$. Define $\map{i_x}{*}{\mathcal{J}}$ to be the inclusion functor into $x$. Then we have
$$\xymatrix{\mathrm{Func}(\mathcal{J}, \text{$\mathbb{C}$-vect}) \ar[r]^{i_x^*}&\text{$\mathbb{C}$-vect} \ar@/_1pc/[l]_{i_x^\ddagger} \ar@/^1pc/[l]^{i_x^\dagger} }$$
where $\text{$\mathbb{C}$-vect}$ is the category of $\mathbb{C}$-vector spaces. The comma category $(i_x\downarrow j)$ can be determined easily.

\begin{lemma}\label{discrete}
 Suppose $\mathcal{J}$ is a small category and $x$ is an object of $\mathcal{J}$. For the inclusion functor $\map{i_x}{*}{\mathcal{J}}$ into $x$, $(i_x\downarrow j)$ is the discrete category $\Hom_{\mathcal{J}}(x,j)$ for any object $j$ of $\mathcal{J}$.
\end{lemma}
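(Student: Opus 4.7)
The plan is to prove this by directly unpacking the definition of the comma category. Recall that an object of $(i_x \downarrow j)$ is a pair $(*, f)$ where $*$ is the unique object of the one-point category and $f : i_x(*) \to j$ is a morphism in $\mathcal{J}$; since $i_x(*) = x$, the set of objects of $(i_x \downarrow j)$ is naturally identified with $\Hom_{\mathcal{J}}(x, j)$.

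Next I would examine the morphisms. A morphism from $(*, f)$ to $(*, f')$ in $(i_x \downarrow j)$ is by definition a morphism $h : * \to *$ in the one-point category such that $f' \circ i_x(h) = f$. Since the one-point category has only the identity morphism, $h = \mathrm{id}_*$ and hence $i_x(h) = \mathrm{id}_x$, which forces $f = f'$. Thus between distinct objects there are no morphisms, and between equal objects there is a unique morphism, namely the identity.

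Putting these two observations together shows that $(i_x \downarrow j)$ is precisely the discrete category on the set $\Hom_{\mathcal{J}}(x, j)$, as claimed. There is no real obstacle here: this is a definitional unpacking, and the only subtle point worth mentioning explicitly is that the triviality of the morphism set of the one-point category $*$ forces the compatibility condition $f' \circ i_x(h) = f$ to degenerate to $f = f'$.
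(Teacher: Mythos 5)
Your proof is correct and is exactly the definitional unpacking the paper has in mind; the paper omits the argument entirely (remarking only that the comma category "can be determined easily"), and your observation that the one-point category's single identity morphism forces $f=f'$ is the whole content. Nothing to add.
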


\begin{prop}
Let $\mathcal{J}$ be a small category. Then, for the functor $$\lmap{i_x^\dagger(\mathbb{C})}{\mathcal{J}}{\text{$\mathbb{C}$-$\mathrm{vect}$}},$$
we have $$i_x^\dagger(\mathbb{C})(j)=\mathbb{C}[\Hom_{\mathcal{J}}(x,j)]$$
and $$\lmap{i_x^\dagger(\mathbb{C})(f)=f^*}{\mathbb{C}[\Hom_{\mathcal{J}}(x,j)]}{\mathbb{C}[\Hom_{\mathcal{J}}(x,j')]}$$
for any object $j$ of $\mathcal{J}$ and for any morphism $\map{f}{j}{j'}$ of $\mathcal{J}$.
\begin{proof}
By Lemma \ref{discrete}, $(i_x\downarrow j)$ is the discrete category $\Hom_{\mathcal{J}}(x,j)$. Hence, we obtain
\begin{eqnarray*}
i_x^\dagger(\mathbb{C})(j)&=& \colim_{(i_x\downarrow j)} \mathbb{C} \\
&=&\mathbb{C}[\Hom_{\mathcal{J}}(x,j)].
\end{eqnarray*}
The universal property of the colimit implies $i_x^\dagger(\mathbb{C})(f)=f^*$.
\end{proof}
\end{prop}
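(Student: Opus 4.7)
The plan is to read off the object and morphism assignments directly from the pointwise formula for the left Kan extension recalled in Section~\ref{Homological}, using Lemma~\ref{discrete} to collapse the indexing comma category.

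First I would unpack the definition: by the formula for $\varphi^\dagger$ applied to $\varphi = i_x$ and $\beta = \underline{\mathbb{C}}: * \to \text{$\mathbb{C}$-vect}$, we have $i_x^\dagger(\mathbb{C})(j) = \colim\, \underline{\mathbb{C}} \circ P_j$, where $P_j: (i_x \downarrow j) \to *$ is the projection. By Lemma~\ref{discrete}, $(i_x \downarrow j)$ is the discrete category whose objects are the elements of $\Hom_{\mathcal{J}}(x,j)$. Hence the diagram $\underline{\mathbb{C}} \circ P_j$ is a discrete diagram taking the constant value $\mathbb{C}$ on each element of $\Hom_{\mathcal{J}}(x,j)$. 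Since colimits of constant $\mathbb{C}$-diagrams over discrete shapes are coproducts in $\text{$\mathbb{C}$-vect}$ and coproducts there are direct sums, we obtain
\[
i_x^\dagger(\mathbb{C})(j) = \bigoplus_{g \in \Hom_{\mathcal{J}}(x,j)} \mathbb{C} = \mathbb{C}[\Hom_{\mathcal{J}}(x,j)],
\]
with the coprojection $\lambda(g): \mathbb{C} \to \mathbb{C}[\Hom_{\mathcal{J}}(x,j)]$ sending $1$ to the basis vector $g$.

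For the morphism part, I would specialize the universal-property diagram for $\varphi^\dagger(\beta)(f)$ displayed in Section~\ref{Homological} to the present situation. A morphism $f: j \to j'$ in $\mathcal{J}$ sends an object $g: x \to j$ of $(i_x \downarrow j)$ to $f \circ g: x \to j'$ in $(i_x \downarrow j')$. The defining commutative diagram forces $i_x^\dagger(\mathbb{C})(f) \circ \lambda(g) = \lambda'(f \circ g)$, so on basis vectors the induced linear map is $g \mapsto f \circ g$. This is precisely the postcomposition map $f^*$, as claimed.

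The only step that requires any care is extracting the morphism formula from the universal property, since one must be sure that the induced functor between comma categories acts by postcomposition with $f$ on the hom-set description of objects; once Lemma~\ref{discrete} is in hand, this is essentially a bookkeeping check. There is no substantial obstacle.
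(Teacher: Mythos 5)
Your proposal is correct and follows the same route as the paper: apply Lemma \ref{discrete} to identify $(i_x\downarrow j)$ with the discrete category $\Hom_{\mathcal{J}}(x,j)$, compute the colimit of the constant $\mathbb{C}$-diagram as the direct sum $\mathbb{C}[\Hom_{\mathcal{J}}(x,j)]$, and read off the action on morphisms from the universal property. You simply spell out the universal-property step (postcomposition with $f$ on coprojections) in more detail than the paper does.
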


\begin{coro}
Let $\mathcal{J}$ be a small category and $x$ be an object of $\mathcal{J}$. Then, $i_x^\dagger(\mathbb{C})$ is projective in $\mathrm{Func}(\mathcal{J}, \mathrm{\mathbb{C}\text{-}vect})$.
\begin{proof}
This is a special case of Lemma \ref{projective}.
\end{proof}
\end{coro}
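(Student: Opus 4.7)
The plan is to invoke Lemma \ref{projective} directly, with $\varphi = i_x : * \to \mathcal{J}$ and $\mathscr{A} = \mathbb{C}\text{-vect}$. First I would verify that the hypotheses of that lemma are met: the category $\mathbb{C}\text{-vect}$ is abelian and has all small colimits (direct sums and cokernels are available), so it qualifies as the target.

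Next I would identify what plays the role of the projective object $P$. The source functor category is $\mathrm{Func}(*, \mathbb{C}\text{-vect})$, which is canonically equivalent to $\mathbb{C}\text{-vect}$ itself, and under this equivalence the object $\mathbb{C}$ appearing inside $i_x^\dagger(\mathbb{C})$ is simply the one-dimensional $\mathbb{C}$-vector space. Since $\mathbb{C}$ is free of rank one, it is projective in $\mathbb{C}\text{-vect}$, so the object feeding into $i_x^\dagger$ is a projective object of $\mathrm{Func}(*, \mathbb{C}\text{-vect})$ as required.

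With these two observations in place, Lemma \ref{projective} immediately yields that $i_x^\dagger(\mathbb{C})$ is projective in $\mathrm{Func}(\mathcal{J}, \mathbb{C}\text{-vect})$, which is the claim. There is essentially no obstacle at this level: all of the technical content has been packaged into Lemma \ref{projective}, whose own justification rests on the adjunction $i_x^\dagger \dashv i_x^*$ together with the exactness of $i_x^*$ (it is just evaluation at $x$, and limits/colimits in a functor category are computed pointwise, hence evaluation is exact, which is precisely the condition needed for the left adjoint to send projectives to projectives).
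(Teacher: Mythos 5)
Your proposal is correct and follows exactly the paper's route: the corollary is obtained by applying Lemma \ref{projective} to $\varphi = i_x : * \to \mathcal{J}$ with $\mathscr{A} = \mathbb{C}\text{-vect}$, noting that $\mathbb{C}$ is projective in $\mathrm{Func}(*, \mathbb{C}\text{-vect}) \simeq \mathbb{C}\text{-vect}$. The extra remarks on why the lemma holds (the adjunction $i_x^\dagger \dashv i_x^*$ and exactness of evaluation) are sound but not needed beyond what the paper cites.
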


\begin{conv}\label{P_x}
For any object $x$ of a small category $\mathcal{J}$, we simply denote $i_x^\dagger(\mathbb{C})$ by $P_x$.
\end{conv}


\section{The invariance of the Euler characteristics under the barycentric subdivision}\label{L^2}

In this section, we prove the four Euler characteristics of categories, Leinster's Euler characteristic \cite{Leia}, the series Euler characteristic \cite{Leib}, the $L^2$-Euler characteristic \cite{FLS} and the Euler characteristic of $\mathbb{N}$-filtered acyclic categories \cite{Nog11} are invariant under the barycentric subdivision for finite acyclic categories. Finally, we introduce an extension of the $L^2$-Euler characteristic and prove our main theorem.

\subsection{Leinster's Euler characteristics of categories}

 Let us recall the definition of Leinster's Euler characteristic \cite{Leia}. Suppose $\mathcal{I}$ is a finite category and the set of objects Ob$(\mathcal{I})$ is labeled by natural numbers as follows.
$$\text{Ob}(\mathcal{I})=\{ x_1, x_2, \dots, x_n\}$$
Let $Z_{\mathcal{I}}$ be the $n\times n$-matrix whose $(i,j)$-entry is the number of morphisms of $\mathcal{I}$ from $x_i$ to $x_j$.

\begin{defi} Let $\mathbf{w,c}$ be row vectors in $\mathbb{Q}^n$. Then, we say $\mathbf{w}$ is a \textit{weighting} on $\mathcal{I}$ if 
$$Z_{\mathcal{I}} {}^{\text{t}} \mathbf{w}=Z_{\mathcal{I}} \begin{pmatrix} w_1\\ w_2 \\ \vdots \\ w_n \end{pmatrix} =\begin{pmatrix} 1 \\ 1 \\ \vdots \\ 1 \end{pmatrix}.$$
We say $\mathbf{c}$ is a \textit{coweighting} on $\mathcal{I}$ if 
$$\mathbf{c}Z_{\mathcal{I}}=(c_1,c_2, \dots, c_n)Z_{\mathcal{I}}=(1,\dots, 1).$$
\end{defi}

\begin{defi}
Define the \textit{Euler characteristic} $\chi_L (\mathcal{I})$ of $\mathcal{I}$ by 
$$\chi_L(\mathcal{I})=\sum^n_{i=1} w_i$$
if $\mathcal{I}$ has both a weighting $\mathbf{w}$ and a coweighting $\mathbf{c}$.

\end{defi}

\begin{defi} We say $\mathcal{I}$ has a \textit{M\"obius inversion} if $Z_{\mathcal{I}}$ has an inverse matrix. Then, the M\"obius inversion $\mu$  is a map
$$\lmap{\mu}{\text{Ob}(\mathcal{I}) \times \text{Ob}(\mathcal{I})}{\mathbb{Q}}$$
defined by $\mu(x_i,x_j)=(i,j)$-entry of $Z_{\mathcal{I}}^{-1}$.
\end{defi}

A finite category $\mathcal{I}$ has a M\"obius inversion if and only if there uniquely exist a weighting and a coweighting on $\mathcal{I}$. Then, we have
$$\sum_{i,j} \mu(x_i,x_j)=\sum^n_{i=1} w_i =\sum^n_{i=1} c_i$$
and $\chi_L(\mathcal{I})=\sum_{i,j} \mu(x_i,x_j)$.

\begin{lemma}\label{iff}
Let $\mathcal{J}$ be a small category. Then, the following are equivalent
\begin{enumerate}
\item $\mathcal{J}$ is finite acyclic.
\item $\mathrm{Sd}(\mathcal{J})$ is a finite category.
\item $\overline{N_k}(\mathcal{J})$ is finite for any $k$ and there exists sufficiently large $M$ such that $\overline{N_n}(\mathcal{J})=\emptyset$ for $n>M$.
\end{enumerate}
\end{lemma}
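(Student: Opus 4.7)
The plan is to prove the equivalences by establishing $(1) \Leftrightarrow (3)$ and $(2) \Leftrightarrow (3)$, treating condition (3) as the hub since it is the easiest to manipulate directly.

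For $(1) \Rightarrow (3)$, the crucial step is the following observation: in a finite acyclic $\mathcal{J}$, the objects $x_0, x_1, \ldots, x_n$ appearing in any non-degenerate chain $(f_1, \ldots, f_n)$ are pairwise distinct. Consecutive objects differ because each $f_i$ is non-identity and the only endomorphisms in an acyclic category are identities. If $x_i = x_j$ for some $i < j$, then composing $f_{i+2}, \ldots, f_j$ produces an arrow $x_{i+1} \to x_j = x_i$, which together with the non-identity arrow $f_{i+1} : x_i \to x_{i+1}$ (forcing $x_i \neq x_{i+1}$) contradicts Definition~\ref{def}. Hence $n \le |\mathrm{Ob}(\mathcal{J})| - 1$, which supplies the uniform bound $M$; finiteness of each $\overline{N_k}(\mathcal{J})$ is automatic since $\mathcal{J}$ has only finitely many morphisms.

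For $(3) \Rightarrow (1)$, finiteness of $\mathcal{J}$ is immediate: $\overline{N_0}(\mathcal{J}) = \mathrm{Ob}(\mathcal{J})$ is finite, and $\overline{N_1}(\mathcal{J})$ is the set of non-identity morphisms and hence finite. For acyclicity I argue by contrapositive. A non-identity endomorphism $f : x \to x$ gives rise to non-degenerate chains $(f, f, \ldots, f)$ of arbitrary length, contradicting the bound $M$. Similarly, a pair of non-identity arrows $f : x \to y$ and $g : y \to x$ with $x \neq y$ yields non-degenerate alternating chains $(f, g, f, g, \ldots)$ of arbitrary length, again contradicting $M$.

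For $(3) \Leftrightarrow (2)$, I would use that the object set of $\mathrm{Sd}(\mathcal{J})$ is the disjoint union $\bigsqcup_k \overline{N_k}(\mathcal{J})$. Finiteness of this union is equivalent to each $\overline{N_k}(\mathcal{J})$ being finite and only finitely many being nonempty, which settles $(2) \Rightarrow (3)$ and the object-finiteness part of $(3) \Rightarrow (2)$. For the morphism-finiteness half of $(3) \Rightarrow (2)$, I invoke the basic property recalled in the preliminaries: morphisms in $\mathrm{Sd}(\mathcal{J})$ are equivalence classes of order-preserving \emph{injections} $[q_X] \to [q_Y]$, of which there are at most $\binom{q_Y + 1}{q_X + 1}$, and since $q_X, q_Y \le M$ this is uniformly bounded. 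The only real content lies in the distinctness-of-objects lemma used in $(1) \Rightarrow (3)$; the rest of the argument is routine bookkeeping.
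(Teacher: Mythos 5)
Your proof is correct. Note that the paper states Lemma \ref{iff} without giving any proof (it is treated as routine, with the relevant facts about $\mathrm{Sd}$ recalled from \cite{Nog11}), so there is no argument of the author's to compare against; your write-up supplies exactly the missing details, and it does so along the lines the paper implicitly relies on. The real content is where you place it: in an acyclic category the objects of a non-degenerate chain are pairwise distinct, since consecutive objects differ (a non-identity endomorphism is forbidden) and a repetition $x_i=x_j$ with $j\ge i+2$ would give, via the composite $f_j\circ\cdots\circ f_{i+2}$, an arrow $x_{i+1}\to x_i$ opposite to $\map{f_{i+1}}{x_i}{x_{i+1}}$ between distinct objects, contradicting Definition \ref{def}; hence chains have length at most $\#\mathrm{Ob}(\mathcal{J})-1$, which yields $(1)\Rightarrow(3)$. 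Your contrapositive for $(3)\Rightarrow(1)$ correctly covers both clauses of Definition \ref{def} with the arbitrarily long non-degenerate chains $(f,f,\dots)$ and $(f,g,f,g,\dots)$, and for $(3)\Rightarrow(2)$ the appeal to the recalled fact that morphisms of $\mathrm{Sd}(\mathcal{J})$ are represented by order-preserving injections $[q_X]\to[q_Y]$ bounds each hom-set by $\binom{q_Y+1}{q_X+1}$, so finiteness of $\coprod_k \overline{N_k}(\mathcal{J})=\mathrm{Ob}(\mathrm{Sd}(\mathcal{J}))$ forces finiteness of the whole category. I see no gaps.
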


\begin{prop}\label{Leinster}
Let $\mathcal{I}$ be a finite category. Then, there exists $\chi_L(\mathrm{Sd}(\mathcal{I}))$ if and only if $\mathcal{I}$ is acyclic, in which case, we have $$\chi_L(\mathcal{I})=\chi_L(\mathrm{Sd}(\mathcal{I})).$$
\begin{proof}
Since $\chi_L$ is defined for only finite categories, $\mathrm{Sd}(\mathcal{I})$ must be finite. And since $\mathrm{Sd}(\mathcal{I})$ is acyclic, it has a M\"obius inversion, so there exists $\chi_L(\mathrm{Sd}(\mathcal{I}))$. Hence, $\mathrm{Sd}(\mathcal{I})$ is finite if and only if $\chi_L(\mathrm{Sd}(\mathcal{I}))$ exists. Lemma \ref{iff} proves the first claim. 

Suppose $\mathcal{I}$ is finite acyclic. Then, since $\mathrm{Sd}(\mathcal{I})$ is finite acyclic, we can apply Corollary 1.5 of \cite{Leia} and we obtain a M\"obius inversion $\mu$. We have
\begin{eqnarray}
\chi_L(\mathrm{Sd}(\mathcal{I}))&=& \sum_{\mathbf{f,g}\in \mathrm{Ob}(\mathrm{Sd}(\mathcal{I}))} \mu(\mathbf{f}, \mathbf{g}) \\
&=&\sum_{\mathbf{g}\in \mathrm{Ob}(\mathrm{Sd}(\mathcal{I}))} \left(\sum_{\mathbf{f}\in \mathrm{Ob}(\mathrm{Sd}(\mathcal{I}))} \mu(\mathbf{f}, \mathbf{g}) \right) \nonumber \\
&=&\sum_{\mathbf{g}\in \coprod^M_{n=0}\overline{N_n}(\mathcal{I})} \left(\sum_{\mathbf{f}\in \coprod^{L(\mathbf{g})}_{n=0}\overline{N_n}(\mathcal{I})} \mu(\mathbf{f}, \mathbf{g}) \right) \nonumber  \\
&=&\sum_{\mathbf{g}\in \coprod^M_{n=0}\overline{N_n}(\mathcal{I})} \left( \sum^{L(\mathbf{g})}_{n=0} (-1)^n \# \overline{N_n}(\mathrm{Sd}(\mathcal{I}))_{\mathbf{g}}\right) \label{Noguchi}
\end{eqnarray}
Theorem 4.7 of \cite{Nog11} implies 
$$\sum^{L(\mathbf{g})}_{n=0} (-1)^n \# \overline{N_n}(\mathrm{Sd}(\mathcal{I}))_{\mathbf{g}}=(-1)^{L(\mathbf{g})}.$$
Thus, the equation $(\ref{Noguchi})$ is 
\begin{eqnarray*}
\sum_{\mathbf{g}\in \coprod^M_{n=0}\overline{N_n}(\mathcal{I})} (-1)^{L(\mathbf{g})}&=&\sum^M_{n=0} (-1)^n \# \overline{N_n}(\mathcal{I}) \\
&=&\chi_L(\mathcal{I}).
\end{eqnarray*}
\end{proof}
\end{prop}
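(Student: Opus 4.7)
The plan is to break the statement into two halves: the existence claim and the equality claim. For existence, I would first note that Leinster's Euler characteristic is only defined for finite categories, so $\chi_L(\mathrm{Sd}(\mathcal{I}))$ presupposes $\mathrm{Sd}(\mathcal{I})$ being finite. By Lemma \ref{iff}, this happens exactly when $\mathcal{I}$ is finite acyclic (the finiteness of $\mathcal{I}$ is hypothesized, so the missing condition is acyclicity). Conversely, when $\mathcal{I}$ is finite acyclic, $\mathrm{Sd}(\mathcal{I})$ is itself finite acyclic, and I would cite Leinster's Corollary~1.5 in \cite{Leia}, which guarantees that a finite acyclic category admits a M\"obius inversion (upper triangular $Z$-matrix with ones on the diagonal), hence $\chi_L$ is defined.

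For the equality, the strategy is to compute $\chi_L(\mathrm{Sd}(\mathcal{I}))$ as an alternating sum over chains and compare it fiberwise to $\chi_L(\mathcal{I})$. Leinster's Corollary 1.5 gives, for any finite acyclic category $\mathcal{A}$, the identity
\[
\chi_L(\mathcal{A}) \;=\; \sum_{\mathbf{f},\mathbf{g}\in\mathrm{Ob}(\mathcal{A})}\mu(\mathbf{f},\mathbf{g}) \;=\; \sum_{n\geq 0}(-1)^n\,\#\overline{N_n}(\mathcal{A}),
\]
where the second equality is the standard expression of the M\"obius function as an alternating count of non-degenerate chains. I would apply this once to $\mathcal{A}=\mathcal{I}$ and once to $\mathcal{A}=\mathrm{Sd}(\mathcal{I})$, reducing the problem to showing
\[
\sum_{n\geq 0}(-1)^n\,\#\overline{N_n}(\mathrm{Sd}(\mathcal{I})) \;=\; \sum_{n\geq 0}(-1)^n\,\#\overline{N_n}(\mathcal{I}).
\]

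To match the two sides I would partition the non-degenerate chains of $\mathrm{Sd}(\mathcal{I})$ by their terminal object $\mathbf{g}\in\overline{N_{L(\mathbf{g})}}(\mathcal{I})$, which gives
\[
\sum_{n}(-1)^n\,\#\overline{N_n}(\mathrm{Sd}(\mathcal{I})) \;=\; \sum_{\mathbf{g}}\sum_{n=0}^{L(\mathbf{g})}(-1)^n\,\#\overline{N_n}(\mathrm{Sd}(\mathcal{I}))_{\mathbf{g}}.
\]
At this point I would invoke Theorem 4.7 of \cite{Nog11}, which asserts that the inner sum equals $(-1)^{L(\mathbf{g})}$; plugging this in collapses the outer sum to $\sum_{\mathbf{g}\in\coprod_n \overline{N_n}(\mathcal{I})}(-1)^{L(\mathbf{g})}$, which is exactly $\sum_n(-1)^n\,\#\overline{N_n}(\mathcal{I})=\chi_L(\mathcal{I})$.

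The main obstacle is the fiberwise identity $\sum_{n=0}^{L(\mathbf{g})}(-1)^n\,\#\overline{N_n}(\mathrm{Sd}(\mathcal{I}))_{\mathbf{g}}=(-1)^{L(\mathbf{g})}$; without the cited theorem I would have to prove it by hand, likely by exhibiting for each fixed endpoint $\mathbf{g}$ an explicit sign-reversing pairing (or, equivalently, by observing that the slice $(\mathrm{Sd}(\mathcal{I})\downarrow \mathbf{g})$ is the face poset of a simplex of dimension $L(\mathbf{g})$, so the alternating count of non-degenerate chains ending at $\mathbf{g}$ is the reduced Euler characteristic of a sphere-like complex). Since that statement is already available in \cite{Nog11}, the remainder of the proof is just the fiberwise bookkeeping described above.
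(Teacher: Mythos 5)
Your proposal is correct and follows essentially the same route as the paper: the existence part via Lemma \ref{iff} and Leinster's Corollary 1.5, and the equality via the decomposition of chains of $\mathrm{Sd}(\mathcal{I})$ by their terminal object $\mathbf{g}$ together with the fiberwise identity $\sum_{n=0}^{L(\mathbf{g})}(-1)^n\#\overline{N_n}(\mathrm{Sd}(\mathcal{I}))_{\mathbf{g}}=(-1)^{L(\mathbf{g})}$ from Theorem 4.7 of \cite{Nog11}. The only cosmetic difference is that you convert $\chi_L(\mathrm{Sd}(\mathcal{I}))$ into an alternating chain count before partitioning, whereas the paper first fixes $\mathbf{g}$ and sums the M\"obius values $\mu(\mathbf{f},\mathbf{g})$ over $\mathbf{f}$; these are the same computation performed in a different order.
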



\subsection{The series Euler characteristic}

 We recall the \textit{series Euler characteristic} \cite{Leib}.

We have the following commutative diagram of rings.

$$\xymatrix{
\mathbb{Z}[t] \ar@{^{(}->}[d] \ar@{^{(}->}[r]&\mathbb{Z}[[t]] \ar@{^{(}->}[d]&\\
\mathbb{Q}(t)\ar[r]\ar@{^{(}->}[r]&\mathbb{Q}((t))
}$$
Here, $\mathbb{Z}[t]$ is the polynomial ring with the coefficients in $\mathbb{Z}$ and $\mathbb{Z}[[t]]$ is the ring of formal power series over $\mathbb{Z}$. $\mathbb{Q}(t)$ and $\mathbb{Q}((t))$ are the quotient fields of them respectively. 

\begin{defi}

Let $f(t)$ be a formal power series over $\mathbb{Z}$. If there exists a rational function $g(t)/h(t)$ in $\mathbb{Q}(t)$ such that $f(t)=g(t)/h(t)$ in $\mathbb{Q}((t))$, then define 
$$ f|_{t=-1}= \frac{g(-1)}{h(-1)} \in \mathbb{Q}$$
if $h(-1) \not = 0.$
\end{defi}

\begin{defi} Let $\mathcal{I}$ be a finite category. Define the \textit{series Euler characteristic} $\chi_{\sum}(\mathcal{I})$ of $\mathcal{I}$ by
$$\chi_{\sum}(\mathcal{I})=f_{\mathcal{I}}(t)|_{t=-1}$$
where $f_{\mathcal{I}}(t)$ is the formal power series defined by
$$f_{\mathcal{I}}(t)=\sum_{n=0}^{\infty}\# \overline{N_n}(\mathcal{I}) t^n .$$
\end{defi}


\begin{prop}
For a finite category $\mathcal{I}$, $\chi_{\sum}(\mathrm{Sd}(\mathcal{I}))$ can be defined if and only if $\mathcal{I}$ is acyclic, in which case, we obtain $$\chi_{\sum}(\mathcal{I})=\chi_{\sum}(\mathrm{Sd}(\mathcal{I})).$$
\begin{proof}
Since $\mathrm{Sd}(\mathcal{I})$ is acyclic, $\mathrm{Sd}(\mathcal{I})$ is finite if and only if there exists $\chi_{\sum}(\mathrm{Sd}(\mathcal{I}))$. Theorem 3.2 of \cite{Leib} implies $\chi_{L}(\mathrm{Sd}(\mathcal{I}))=\chi_{\sum}(\mathrm{Sd}(\mathcal{I}))$ and Theorem \ref{Leinster} completes this proof.
\end{proof}
\end{prop}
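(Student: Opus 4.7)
The plan is to reduce the proposition to the preceding Proposition \ref{Leinster} by passing through the identity $\chi_L=\chi_{\sum}$ on finite acyclic categories. First I would dispose of the ``if and only if'' part: the series Euler characteristic is defined in the paper only for finite categories, so the existence of $\chi_{\sum}(\mathrm{Sd}(\mathcal{I}))$ forces $\mathrm{Sd}(\mathcal{I})$ to be finite. Lemma \ref{iff} then rephrases finiteness of $\mathrm{Sd}(\mathcal{I})$, for a finite $\mathcal{I}$, as acyclicity of $\mathcal{I}$. Conversely, once $\mathcal{I}$ is finite acyclic, $\mathrm{Sd}(\mathcal{I})$ is itself a finite acyclic category, and Lemma \ref{iff} further tells me that $\overline{N_n}(\mathrm{Sd}(\mathcal{I}))=\emptyset$ for all sufficiently large $n$, so $f_{\mathrm{Sd}(\mathcal{I})}(t)$ is automatically a polynomial and its value at $t=-1$ is unambiguously defined.

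For the equality I would use that a finite acyclic category $\mathcal{C}$ always admits a M\"obius inversion: ordering the objects compatibly with the poset $(\mathrm{Ob}(\mathcal{C}),\le)$ of Definition \ref{def} makes $Z_{\mathcal{C}}$ upper-triangular with $1$'s on the diagonal, hence invertible over $\mathbb{Q}$. Leinster's Theorem 3.2 of \cite{Leib} therefore applies and gives $\chi_L(\mathcal{C})=\chi_{\sum}(\mathcal{C})$. I would apply this to both $\mathcal{I}$ and $\mathrm{Sd}(\mathcal{I})$ and insert Proposition \ref{Leinster} in the middle, producing the chain
$$\chi_{\sum}(\mathcal{I})=\chi_L(\mathcal{I})=\chi_L(\mathrm{Sd}(\mathcal{I}))=\chi_{\sum}(\mathrm{Sd}(\mathcal{I})),$$
which is exactly the claimed equality.

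The main obstacle is not located in this proof but upstream: the genuine content sits in Proposition \ref{Leinster} (ultimately resting on the M\"obius-inversion computation and Theorem 4.7 of \cite{Nog11}) together with the comparison $\chi_L=\chi_{\sum}$ on categories with M\"obius inversion. Once those two inputs are in hand the present proposition is pure bookkeeping. The only subtlety worth flagging is that the evaluation defining $\chi_{\sum}(\mathrm{Sd}(\mathcal{I}))$ really comes from the \emph{polynomial} $f_{\mathrm{Sd}(\mathcal{I})}(t)$ rather than from some genuinely rational expression with a possible pole at $t=-1$; this is automatic from finiteness of the nondegenerate nerve of a finite acyclic category, so no extra argument about the denominator is needed.
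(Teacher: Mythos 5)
Your proposal is correct and follows essentially the same route as the paper: the existence statement is reduced via Lemma \ref{iff} to finiteness of $\mathrm{Sd}(\mathcal{I})$ (using that $\mathrm{Sd}(\mathcal{I})$ is acyclic, so its power series is a polynomial), and the equality is obtained by combining Theorem 3.2 of \cite{Leib} (M\"obius inversion gives $\chi_L=\chi_{\sum}$ on finite acyclic categories) with Proposition \ref{Leinster}. You merely make explicit what the paper leaves implicit, namely that $\chi_L=\chi_{\sum}$ is applied to $\mathcal{I}$ as well as to $\mathrm{Sd}(\mathcal{I})$ in the chain $\chi_{\sum}(\mathcal{I})=\chi_L(\mathcal{I})=\chi_L(\mathrm{Sd}(\mathcal{I}))=\chi_{\sum}(\mathrm{Sd}(\mathcal{I}))$.
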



\subsection{The Euler characteristic of $\mathbb{N}$-filtered acyclic categories}

We recall the \textit{Euler characteristic of $\mathbb{N}$-filtered acyclic category} \cite{Nog11}.

\begin{defi}\label{n-fil}\
Let $\mathcal{A}$ be an acyclic category. A functor $\map{\mu}{\mathcal{A}}{\mathbb{N}}$ satisfying $\mu (x)<\mu (y)$ for $x<y$ in Ob$(\mathcal{A})$ is called \textit{an $\mathbb{N}$-filtration of $\mathcal{A}$}. A pair $(\mathcal{A}, \mu )$ is called \textit{an $\mathbb{N}$-filtered acyclic category}.
\end{defi}

\begin{exam}\label{length}
Let $\mathcal{J}$ be a small category. Then, Sd$(\mathcal{J})$ is an acyclic category. The length functor $L$ gives a natural $\mathbb{N}$-filtration to Sd$(\mathcal{J})$ where the functor $L$ is defined by $L(\mathbf{f})=n$ for $\mathbf{f}$ of $\overline{N_n}( \mathcal{J})$. Thus, we obtain an $\mathbb{N}$-filtered acyclic category $(\mathrm{Sd}(\mathcal{J}),L)$.
\end{exam}

\begin{defi}
Let $(\mathcal{A}, \mu)$ be an $\mathbb{N}$-filtered acyclic category. Then, define $\chi_{\text{fil}}(\mathcal{A}, \mu)$ as follows. 

We have the pair of the $\Delta$-set and the natural transformation $$(\overline{N_*}(\mathcal{A}), \overline{N_*}(\mu)).$$ A \textit{$\Delta$-set} is a simplicial set which forgets the degeneracy operators and it is sometimes called \textit{semi-simplicial set}. For an acyclic category $\mathcal{A}$, the pair $$(\overline{N_*}(\mathcal{A}), d|_{\overline{N_*}(\mathcal{A})})$$ becomes a $\Delta$-set since all the$\overline{N_*}(\mathcal{A})$ are closed under the face operator.

Let
$$\overline{N_i}(\mathcal{A})_n=\{ \mathbf{f}\in \overline{N_i}(\mathcal{A})\mid \max(\overline{N_i}(\mu)(\mathbf{f}))=n  \}$$
for natural numbers $i,n$. Suppose each $\overline{N_i}(\mathcal{A})_n$ is finite and $\overline{N_i}(\mathcal{A})_n$ is an empty set if $n<i$. Define the formal power series $f_{\chi}(\mathcal{A}, \mu)(t)$ over $\mathbb{Z}$ by

$$f_{\chi}(\mathcal{A}, \mu)(t)=\sum^\infty_{n=0}(-1)^n\left(\sum^n_{i=0}(-1)^i\# \overline{N_i}(\mathcal{A})_n \right)t^n.$$
And define
$$\chi_{\text{fil}}(\mathcal{A}, \mu)=f_{\chi}(\mathcal{A}, \mu)(t)|_{t=-1}$$
if it exists. 
\end{defi}

\begin{lemma}
Let $\mathcal{A}$ be a finite acyclic category. Then $\mathcal{A}$ has an $\mathbb{N}$-filtration.
\begin{proof}
We can give a linear ordering 
$$\mathrm{Ob}(\mathcal{A})=\{x_1,\dots, x_n\}$$
to the set of objects of $\mathcal{A}$ such that if $x_i< x_j$, then $i<j$ where this ordering is defined in Definition \ref{def}. Indeed, take a maximal element $x$ of $\mathrm{Ob}(\mathcal{A})$ and label it as $x_n$. Inductively, we obtain such labeling. And this labeling gives an $\mathbb{N}$-filtration to $\mathcal{A}$.
\end{proof}
\end{lemma}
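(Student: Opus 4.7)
My plan is to build $\mu$ as an $\mathbb{N}$-valued linear extension of the poset structure on $\mathrm{Ob}(\mathcal{A})$. By Definition \ref{def}, the relation $x\le y$ iff $\Hom_{\mathcal{A}}(x,y)\neq\emptyset$ makes $\mathrm{Ob}(\mathcal{A})$ into a poset, and since $\mathcal{A}$ is finite this poset is finite, which is precisely the setting where a topological sort is available.

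First I would enumerate $\mathrm{Ob}(\mathcal{A})=\{x_1,\dots,x_n\}$ so that $x_i<x_j$ forces $i<j$. Concretely, pick a maximal element of $\mathrm{Ob}(\mathcal{A})$ (one exists because the poset is nonempty and finite), label it $x_n$, delete it from the poset, repeat on the residual poset to obtain $x_{n-1}$, and continue; after $n$ steps every object has been labeled, and the enumeration has the desired property by construction.

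Next I would define $\lmap{\mu}{\mathcal{A}}{\mathbb{N}}$ on objects by $\mu(x_i)=i$, with $\mathbb{N}$ viewed as a thin category via its usual order. To see this extends to a functor, I must check that for every morphism $\map{f}{x}{y}$ of $\mathcal{A}$ we have $\mu(x)\le\mu(y)$. If $x=y$ this is trivial; if $x\neq y$ then by definition $x<y$ in the poset of objects, so by construction of the enumeration $\mu(x)<\mu(y)$. The same inequality is precisely the defining strict-monotonicity condition of Definition \ref{n-fil}, so $\mu$ is an $\mathbb{N}$-filtration.

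I do not anticipate a genuine obstacle: the entire argument reduces to the existence of a linear extension of a finite poset, which is elementary. The only substantive ingredient is that every finite nonempty poset possesses a maximal element, a routine consequence of finiteness, and acyclicity of $\mathcal{A}$ enters only through Definition \ref{def} to ensure that $\le$ really is a partial order rather than a mere preorder.
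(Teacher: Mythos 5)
Your proposal is correct and follows essentially the same route as the paper: both arguments extract a linear extension of the finite poset $\mathrm{Ob}(\mathcal{A})$ by repeatedly removing a maximal element and then take $\mu(x_i)=i$ as the filtration. You merely make explicit the verification (strict monotonicity on non-identity morphisms) that the paper leaves implicit.
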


\begin{prop}
Let $\mathcal{A}$ be a finite acyclic category. Then, we have
$$\chi_{\mathrm{fil}}(\mathcal{A}, \mu)=\chi_{\mathrm{fil}}(\mathrm{Sd}(\mathcal{A}),L)$$
where $\mu$ is any $\mathbb{N}$-filtration of $\mathcal{A}$ and $L$ is the length $\mathbb{N}$-filtration (see Example \ref{length}).
\begin{proof}
We have 
\begin{eqnarray*}
f_{\chi}(\mathcal{A}, \mu)(t)&=&\sum^\infty_{n=0}(-1)^n\left(\sum^n_{i=0}(-1)^i\# \overline{N_i}(\mathcal{A})_n \right)t^n \\
&=&\sum^M_{n=0}(-1)^n\left(\sum^n_{i=0}(-1)^i\# \overline{N_i}(\mathcal{A})_n \right)t^n
\end{eqnarray*}
for sufficiently large $M$. Hence, $f_{\chi}(\mathcal{A}, \mu)(t)$ is a polynomial. Thus, we obtain
\begin{eqnarray*}
\chi_{\mathrm{fil}}(\mathcal{A}, \mu)&=&f_{\chi}(\mathcal{A}, \mu)(t)|_{t=-1} \\
&=&f_{\chi}(\mathcal{A}, \mu)(-1) \\
&=& \sum^M_{n=0}(-1)^n\left(\sum^n_{i=0}(-1)^i\# \overline{N_i}(\mathcal{A})_n \right)(-1)^n \\
&=& \sum^M_{n=0}\left(\sum^n_{i=0}(-1)^i\# \overline{N_i}(\mathcal{A})_n \right) \\
&=& \sum^M_{i=0}(-1)^i\# \overline{N_i}(\mathcal{A})  \\
&=&\chi_{\sum}(\mathcal{A}).
\end{eqnarray*}
Since $\chi_{\sum}(\mathcal{A})$ exists, Theorem 4.9 of \cite{Nog11} implies 
$$\chi_{\sum}(\mathcal{A})=\chi_{\mathrm{fil}}(\mathrm{Sd}(\mathcal{A}),L).$$
Hence, 
$$\chi_{\mathrm{fil}}(\mathcal{A}, \mu)=\chi_{\mathrm{fil}}(\mathrm{Sd}(\mathcal{A}),L)$$
\end{proof}
\end{prop}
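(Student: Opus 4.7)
The plan is to show that both sides of the claimed equality reduce to the series Euler characteristic $\chi_{\Sigma}(\mathcal{A})$, and then invoke the already-cited Theorem 4.9 of \cite{Nog11} to finish.

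First I would argue that $f_{\chi}(\mathcal{A},\mu)(t)$ is actually a polynomial. Since $\mathcal{A}$ is finite acyclic, Lemma \ref{iff} provides an integer $M$ such that $\overline{N_i}(\mathcal{A})=\emptyset$ for $i>M$ and each $\overline{N_i}(\mathcal{A})$ is finite. Because $\mu$ is $\mathbb{N}$-valued on the finite set $\mathrm{Ob}(\mathcal{A})$, the partition $\overline{N_i}(\mathcal{A})=\coprod_n \overline{N_i}(\mathcal{A})_n$ has only finitely many nonempty strata, and all strata are finite. Thus $f_{\chi}(\mathcal{A},\mu)(t)$ is a finite sum and evaluating at $t=-1$ is just substitution.

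Next I would evaluate $f_{\chi}(\mathcal{A},\mu)(-1)$ directly. Using $(-1)^n(-1)^n=1$, the outer signs cancel, leaving a double sum over pairs $(i,n)$. Swapping the order of summation and recalling that $\overline{N_i}(\mathcal{A})_n=\emptyset$ when $n<i$, together with $\coprod_n \overline{N_i}(\mathcal{A})_n = \overline{N_i}(\mathcal{A})$, collapses the inner sum into $\#\overline{N_i}(\mathcal{A})$. The result is
$$f_{\chi}(\mathcal{A},\mu)(-1)=\sum_{i=0}^{M}(-1)^i\#\overline{N_i}(\mathcal{A}),$$
which is exactly $f_{\mathcal{A}}(-1)=\chi_{\Sigma}(\mathcal{A})$, since $f_{\mathcal{A}}(t)$ is also a polynomial in the finite acyclic case. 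In particular $\chi_{\mathrm{fil}}(\mathcal{A},\mu)$ is independent of the chosen filtration $\mu$.

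Finally, now that $\chi_{\Sigma}(\mathcal{A})$ is known to exist, Theorem 4.9 of \cite{Nog11} delivers $\chi_{\Sigma}(\mathcal{A})=\chi_{\mathrm{fil}}(\mathrm{Sd}(\mathcal{A}),L)$, closing the chain of equalities. The only real obstacle is the bookkeeping in the second step, namely justifying the interchange of the two summation indices and identifying the collapsed sum with $\chi_{\Sigma}(\mathcal{A})$; the deep content on the $\mathrm{Sd}$ side is already packaged in the cited theorem.
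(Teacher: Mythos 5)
Your proposal is correct and follows essentially the same route as the paper: show $f_{\chi}(\mathcal{A},\mu)(t)$ is a polynomial, evaluate at $t=-1$ so the outer signs cancel and the double sum collapses to $\sum_i(-1)^i\#\overline{N_i}(\mathcal{A})=\chi_{\Sigma}(\mathcal{A})$, then invoke Theorem 4.9 of \cite{Nog11}. Your justification of the polynomial claim via the finiteness of the strata $\overline{N_i}(\mathcal{A})_n$ is just a slightly more explicit version of the paper's appeal to a sufficiently large $M$.
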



\subsection{The $L^2$-Euler characteristic}

In this subsection, we show the invariance of the $L^2$-Euler characteristic under the barycentric subdivision for finite acyclic categories.

First, we recall the $L^2$\textit{-Euler characteristic} \cite{FLS}. Let $k$ be a commutative ring and let $\mathcal{J}$ be a small category. We denote the category of $k$-modules by $k$-Mod. 

\begin{defi}
If $\map{M}{\mathcal{J}^{\text{op}}}{\text{$k$-Mod}}$ and $\map{N}{\mathcal{J}}{\text{$k$-Mod}}$ are functors, then the \textit{tensor product} $M\otimes_{k\mathcal{J}} N$ is the quotient of the $k$-module
$$\bigoplus_{x \in \text{Ob}(\mathcal{J})} M(x)\otimes_k N(x)$$
by the $k$-submodule generated by elements of the form
$$(M(f^{\mathrm{op}})m)\otimes n -m \otimes(N(f)n)$$
where $\map{f}{x}{y}$ is a morphism in $\mathcal{J}$, $m$ of $M(y)$, and $n$ of $N(x)$. 
\end{defi}

For a discrete group $G$, we denote the \textit{group von Neumann algebra} by $\mathcal{N}(G)$. It is a von Neumann algebra and when $G$ is a finite group $\mathcal{N}(G)$ is just the group ring $\mathbb{C}[G]$. We briefly recall its dimension theory, see \cite{FLS},\cite{Luca},\cite{Lucb} for more details. The \textit{von Neumann dimension} $\dim_{\mathcal{N}(G)}$ is a map which assigns real numbers to left $\mathcal{N}(G)$-modules $$\lmap{\dim_{\mathcal{N}(G)}}{\mathcal{N}(G)\text{-}\mathrm{Mod}}{[0,+\infty]}$$
Here, we ignore the functional analytic aspects of $\mathcal{N}(G)$, so we regard it purely algebraically. An \textit{$\mathcal{N}(G)$-chain complex} is a chain complex of $\mathcal{N}(G)$-modules and its homology is also the usual homology. We often use the fact that when $G$ is a finite group, $\dim_{\mathcal{N}(G)}=\frac{1}{\# G}\dim_{\mathbb{C}}$. For an object $x$ of $\mathcal{J}$, the froup von Neumann algebra $\mathcal{N}(\mathrm{Aut}(x))$ is simply denoted by $\mathcal{N}(x)$.

\begin{defi}
Let $C_*$ be an $\mathcal{N}(G)$-chain complex. The \textit{$p$-th $L^2$-Betti number of $C_*$} is the von Neumann dimension of the $\mathcal{N}(G)$-module given by its $p$-th homology, namely
$$b^{(2)}_p(C_*)= \dim_{\mathcal{N}(G)}(H_p(C_*)) \in [0,\infty].$$
\end{defi}

\begin{defi}
Let $C_*$ be an $\mathcal{N}(G)$-chain complex. Define
$$h^{(2)}(C_*)=\sum_{0\le p}b_p^{(2)}(C_*) \in [0,\infty].$$
If $h^{(2)}(C_*)<\infty$, the \textit{$L^2$-Euler characteristic of $C_*$} is defined by
$$\chi^{(2)}(C_*)=\sum_{0\le p}(-1)^p b_p^{(2)}(C_*) \in \mathbb{R}.$$
\end{defi}
\begin{defi}
Let $\mathcal{J}$ be a small category and let $x$ be an object of $\mathcal{J}$. Define \textit{the splitting functor} at $x$ $$\lmap{S_x}{\mathrm{Func}(\mathcal{J}^{\mathrm{op}}, \mathrm{\mathbb{C}\text{-}vect})}{\mathrm{Func}(\mathrm{Aut}(x)^{\mathrm{op}}, \mathrm{\mathbb{C}\text{-}vect})}$$ as follows. For a functor $\map{F}{\mathcal{J}^{\mathrm{op}}}{\mathrm{\mathbb{C}\text{-}vect}}$, 
$$\lmap{S_x F}{\mathrm{Aut}(x)^{\mathrm{op}}}{\mathrm{\mathbb{C}\text{-}vect}}$$
is defined by 
$$S_x F(*)=\mathrm{Coker}\left( \bigoplus_{\map{u}{x}{y} \ \mathrm{ in }\ \mathcal{J}, \not \exists u^{-1}} F(u^{\mathrm{op}}): \bigoplus_{\map{u}{x}{y} \ \mathrm{ in }\ \mathcal{J}, \not \exists u^{-1}} F(y)\longrightarrow F(x) \right)$$
where this direct sum runs over all the morphisms $\map{u}{x}{y}$ in $\mathcal{J}$ which are not invertible.
For $g^{\mathrm{op}}$ of $\mathrm{Aut}(x)^{\mathrm{op}}$, 
$$S_x(g^{\mathrm{op}}):S_xF(*)\longrightarrow S_x F(*)$$ is defined by $S_x(g^{\mathrm{op}})[m]=[F(g^{\mathrm{op}})(m)]$ for any $[m]$ of $S_xF(*)$.

For a natural transformation $\alpha:F\Rightarrow G$, $S_x \alpha$ is defined by the universal property of the cokernels.
$$\xymatrix{\bigoplus F(y)\ar[d]_{\bigoplus \alpha (y)}\ar[r]^{\bigoplus F(u)}&F(x)\ar[d]^{\alpha(x)}\ar[r]&\mathrm{Coker}=S_x F \ar@{.>}[d]^{\exists ! S_x \alpha}\\
\bigoplus G(y)\ar[r]^{\bigoplus G(u)}&G(x)\ar[r]&\mathrm{Coker}=S_x G
}$$
\end{defi}

\begin{defi}
We call $\mathcal{J}$ \textit{of type} $(L^2)$ if for some projective resolution $P_*$ in $\mathrm{Func}(\mathcal{J}^{\mathrm{op}}, \mathrm{\mathbb{C}\text{-}vect})$ of the constant functor $\underline{\mathbb{C}}$ we have
$$h^{(2)}(\mathcal{J})= \sum_{[x]\in \text{iso($\mathcal{J})$}} h^{(2)} (S_x P_* \otimes_{\mathbb{C}[x]} \mathcal{N}(x))<\infty .$$
\end{defi}

\begin{defi}
Suppose that $\mathcal{J}$ is of type $(L^2)$. Define the \textit{$L^2$-Euler characteristic} of $\mathcal{J}$ to be the real number
$$\chi^{(2)}(\mathcal{J})= \sum_{[x]\in \text{iso($\mathcal{J})$}} \chi^{(2)} (S_x P_* \otimes_{\mathbb{C}[x]} \mathcal{N}(x))\in \mathbb{R},$$
where $P_*$ is a projective resolution of the constant functor $\underline{\mathbb{C}}$ in $\mathrm{Func}(\mathcal{J}^{\mathrm{op}}, \mathrm{\mathbb{C}\text{-}vect})$.

Notice that this definition makes sense since the condition $(L^2)$ ensures that the sum $\sum_{[x]\in \text{iso($\mathcal{J})$}} \chi^{(2)} (S_x P_* \otimes_{\mathbb{C}[x]} \mathcal{N}(x))$ is absolutely convergent.
\end{defi}

The following is our main theorem of this section and the proof is given later.

\begin{them}\label{moto-main}
For a small category $\mathcal{I}$, $\chi^{(2)}(\mathrm{Sd}(\mathcal{I})^{\mathrm{op}})$ exists if and only if $\mathcal{J}$ is finite acyclic, in which case, we obtain $$\chi^{(2)}(\mathrm{Sd}(\mathcal{I})^{\mathrm{op}})=\chi^{(2)}(\mathcal{I}).$$
\end{them}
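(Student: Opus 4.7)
The plan is to realise the projective resolution hinted at in the introduction. In $\mathrm{Func}(\mathrm{Sd}(\mathcal{I}),\mathbb{C}\text{-vect})$ I would construct
$$\cdots \longrightarrow \bigoplus_{\mathbf{f}_n \in \overline{N_n}(\mathcal{I})} P_{\mathbf{f}_n} \stackrel{\partial_n}{\longrightarrow} \cdots \longrightarrow \bigoplus_{\mathbf{f}_0 \in \overline{N_0}(\mathcal{I})} P_{\mathbf{f}_0} \stackrel{\partial_0}{\longrightarrow} \underline{\mathbb{C}} \longrightarrow 0,$$
where each $P_{\mathbf{f}_n}=i_{\mathbf{f}_n}^{\dagger}(\mathbb{C})$ is projective by the Corollary of Section 2.3 and direct sums of projectives are projective. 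The differentials are induced from alternating face operators on non-degenerate chains, each face giving rise to a morphism in $\mathrm{Sd}(\mathcal{I})$ and hence to a natural transformation between representable projectives, while $\partial_0$ is the augmentation. The substantive step is exactness, which I would verify pointwise: for each object $\mathbf{g}$ of $\mathrm{Sd}(\mathcal{I})$, evaluating the complex at $\mathbf{g}$ yields an augmented complex of $\mathbb{C}$-vector spaces whose $n$-th term has basis $\coprod_{\mathbf{f}_n\in\overline{N_n}(\mathcal{I})}\mathrm{Hom}_{\mathrm{Sd}(\mathcal{I})}(\mathbf{f}_n,\mathbf{g})$, and I would show it acyclic by introducing the equivalence $n$-simplices advertised in the introduction.

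With the resolution established, I would compute $S_{\mathbf{g}}$ applied to it (noting that the objects of $\mathrm{Sd}(\mathcal{I})^{\mathrm{op}}$ and $\mathrm{Sd}(\mathcal{I})$ coincide). Because $\mathrm{Sd}(\mathcal{I})$ is acyclic, every morphism $\mathbf{f}\to\mathbf{g}$ with $\mathbf{f}\neq\mathbf{g}$ is non-identity and factors trivially as $\mathbf{f}\xrightarrow{\mathrm{id}}\mathbf{f}\to\mathbf{g}$ through a non-identity morphism into $\mathbf{g}$, while the identity $\mathbf{g}\to\mathbf{g}$ cannot factor through any non-identity morphism into $\mathbf{g}$. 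Combined with the Proposition giving $P_{\mathbf{f}}(\mathbf{g})=\mathbb{C}[\mathrm{Hom}_{\mathrm{Sd}(\mathcal{I})}(\mathbf{f},\mathbf{g})]$, one obtains
$$S_{\mathbf{g}} P_{\mathbf{f}} \;=\; \begin{cases} \mathbb{C} & \text{if } \mathbf{f} = \mathbf{g}, \\ 0 & \text{if } \mathbf{f}\neq \mathbf{g}.\end{cases}$$
Since $S_{\mathbf{g}}$ preserves direct sums, $S_{\mathbf{g}} P_*$ is concentrated in the unique degree $n_{\mathbf{g}}$ with $\mathbf{g}\in\overline{N_{n_{\mathbf{g}}}}(\mathcal{I})$, with trivial differential. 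As $\mathrm{Aut}(\mathbf{g})$ is trivial, $\mathcal{N}(\mathbf{g})=\mathbb{C}$, tensoring does nothing, and the only nonzero $L^2$-Betti number is $b^{(2)}_{n_{\mathbf{g}}}=1$.

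Summing over isomorphism classes of $\mathrm{Sd}(\mathcal{I})^{\mathrm{op}}$, which are singletons by acyclicity, yields
$$h^{(2)}(\mathrm{Sd}(\mathcal{I})^{\mathrm{op}}) = \sum_{\mathbf{g}\in\mathrm{Ob}(\mathrm{Sd}(\mathcal{I}))} 1 = \sum_{n\ge 0} \#\overline{N_n}(\mathcal{I}).$$
By Lemma \ref{iff} this sum is finite exactly when $\mathcal{I}$ is finite acyclic, which gives the "if and only if". In that case $f_{\mathcal{I}}(t)$ is a polynomial and
$$\chi^{(2)}(\mathrm{Sd}(\mathcal{I})^{\mathrm{op}}) = \sum_{n\ge 0} (-1)^{n} \#\overline{N_n}(\mathcal{I}) = f_{\mathcal{I}}(-1) = \chi_{\sum}(\mathcal{I}).$$
Combined with the agreement $\chi^{(2)}(\mathcal{I}) = \chi_{\sum}(\mathcal{I})$ for finite acyclic $\mathcal{I}$ quoted in the introduction, this delivers $\chi^{(2)}(\mathrm{Sd}(\mathcal{I})^{\mathrm{op}}) = \chi^{(2)}(\mathcal{I})$.

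The hard part is the exactness of the displayed resolution. Once exactness is in hand the remainder reduces to the pointwise computation of $S_{\mathbf{g}}P_{\mathbf{f}}$ (formal, driven only by acyclicity) and to an elementary identification of the resulting alternating sum with $\chi_{\sum}(\mathcal{I})$. By contrast, producing an explicit null-homotopy for the augmented complex at each $\mathbf{g}$ is genuinely combinatorial: sub-chains of a morphism $\mathbf{f}\to \mathbf{g}$ in $\mathrm{Sd}(\mathcal{I})$ must be organised so that the equivalence relation built into the definition of $\mathrm{Sd}(\mathcal{I})$ matches the simplicial boundaries, and this is what the apparatus of equivalence $n$-simplices is designed to achieve.
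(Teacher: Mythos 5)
Your outline follows the paper's own route almost step for step: the same resolution $P(\mathrm{Sd}(\mathcal{I}))_*$ by direct sums of representables indexed by $\overline{N_n}(\mathcal{I})$, the same observation that acyclicity of $\mathrm{Sd}(\mathcal{I})$ forces $S_{\mathbf{g}}P_{\mathbf{f}}=\mathbb{C}$ for $\mathbf{f}=\mathbf{g}$ and $0$ otherwise (this is exactly Lemma \ref{AS}), the same count $h^{(2)}(\mathrm{Sd}(\mathcal{I})^{\mathrm{op}})=\sum_n\#\overline{N_n}(\mathcal{I})$ combined with Lemma \ref{iff} for the equivalence, and the same evaluation of the alternating sum at the end (the paper routes it through $\chi_L$ and Lemma 7.3 of \cite{FLS} rather than through $\chi_{\sum}$, an immaterial difference for finite acyclic $\mathcal{I}$). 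Those parts are correct as written.

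The genuine gap is that the one substantive step — that $P(\mathrm{Sd}(\mathcal{I}))_*$ is a chain complex at all and is exact — is announced as a plan but not carried out, and merely invoking ``equivalence $n$-simplices'' (a phrase taken from the introduction) does not supply it. Two things must actually be done. First, the differential can only sum over those faces $d_j(\mathbf{f}_k)$ that remain non-degenerate: for a general $\mathcal{I}$ an interior face can compose two non-identities to an identity, so terms are dropped, and then $\partial_{k-1}\partial_k=0$ is no longer the automatic simplicial identity; the paper proves it by exhibiting the evaluated complex as the image of a surjective chain map $p_*$ from the full standard $n$-simplex. Second, pointwise exactness at an object $\mathbf{g}\in\overline{N_n}(\mathcal{I})$ requires the concrete combinatorial model: one defines $i\sim_{\mathbf{g}}j$ on $[n]$ when $\mathbf{g}$ sends the segment between $i$ and $j$ to an identity, forms $C_k^{(n)}=(A_k^{(n)}-B_k^{(n)})/\approx$, checks that the partially defined face maps and the index set $F([(i_0,\dots,i_k)])$ are independent of representatives, identifies $P(\mathrm{Sd}(\mathcal{I}))_*(\mathbf{g})$ with $\{\mathbb{C}[C_k^{(n)}],D_k\}$ via the maps $\varphi_k,\psi_k$ (using that morphisms in $\mathrm{Sd}(\mathcal{I})$ are order-preserving injections determined by their target), and then writes down the explicit contracting homotopy $h_k([(i_0,\dots,i_k)])=[(0,i_0,\dots,i_k)]$ if $0\not\sim i_0$ and $0$ otherwise. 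Without this construction your proposal asserts precisely the point the paper spends Propositions \ref{DMT} and \ref{PR} establishing, so as it stands the argument is incomplete exactly where you acknowledge it is hard.
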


To prove this theorem we need Lemma \ref{AS} and Proposition \ref{PR}. In lemma \ref{AS}, we characterize the splitting functor for an acyclic category. And in Proposition \ref{PR}, we construct a projective resolution of $\underline{\mathbb{C}}$ in $\mathrm{Func}(\mathrm{Sd}(\mathcal{J}), \mathbb{C}\text{-}\mathrm{vect})$.

For any object $x$ of a small category $\mathcal{J}$, we simply denote $i_x^\dagger(\mathbb{C})$ by $P_x$ (see Convention \ref{P_x}).

\begin{lemma}\label{AS}
Let $\mathcal{A}$ be an acyclic category and $x$ and $y$ be objects of $\mathcal{A}$. For the functor
$$\lmap{S_x}{\mathrm{Func}(\mathcal{A}^{\mathrm{op}}, \mathrm{\mathbb{C}\text{-}vect})}{ \mathrm{\mathbb{C}\text{-}vect}},$$
we have
$$S_x P_y=\begin{cases} \mathbb{C} &\text{if } x=y \\ 0&\text{if } x\not =y\end{cases}.$$
\begin{proof}
For the functor $\map{P_y}{\mathcal{A}^{\mathrm{op}}}{\mathbb{C}\text{-vect}}$, we have 
\begin{eqnarray*}
P_y(z)&=&\mathbb{C}[\Hom_{\mathcal{A}^{\mathrm{op}}}(y,z)] \\
&=&\mathbb{C}[\Hom_{\mathcal{A}}(z,y)]
\end{eqnarray*}
for an object $z$ of $\mathcal{A}^{\mathrm{op}}$.
We have 
\begin{eqnarray*}
S_x P_y&=& \mathrm{Coker}\left( \bigoplus_{\map{u}{x}{z} \ \mathrm{ in }\ \mathcal{A}, \not \exists u^{-1}} P_y(u^{\mathrm{op}}): \bigoplus_{\map{u}{x}{z} \ \mathrm{ in }\ \mathcal{A}, \not \exists u^{-1}} P_y (z)\longrightarrow P_y(x) \right) \\
&=&\mathrm{Coker}\left( \bigoplus_{\begin{matrix}\map{u}{x}{z} \\ u\not =1\end{matrix}} u^*: \bigoplus_{\begin{matrix}\map{u}{x}{z} \\ u\not =1\end{matrix}} \mathbb{C}[\Hom_{\mathcal{A}}(z,y)] \rightarrow \mathbb{C}[\Hom_{\mathcal{A}}(x,y)] \right) .
\end{eqnarray*}
If $x=y$, then 
\begin{eqnarray*}
S_x P_x&=& \mathrm{Coker}\left( \bigoplus_{\begin{matrix}\map{u}{x}{z} \\ u\not =1\end{matrix}} u^*: \bigoplus_{\begin{matrix}\map{u}{x}{z} \\ u\not =1\end{matrix}} \mathbb{C}[\Hom_{\mathcal{A}}(z,x)] \rightarrow \mathbb{C} \right) .
\end{eqnarray*}
Here, all of the running $\map{u}{x}{z}$ are not $1_x$, so $x\not =z$. Since $\mathcal{A}$ is acyclic,  $\Hom_\mathcal{A}(z,x)$ are empty-sets if there exists a morphism $\map{u}{x}{z}$. Hence, \begin{eqnarray*}
S_x P_x&=& \mathrm{Coker}\left( 0: 0 \rightarrow \mathbb{C} \right) \\
&=&\mathbb{C}.
\end{eqnarray*}
Suppose $x\not = y$. If $\Hom_\mathcal{A}(x,y)=\emptyset$, then we obtain  
\begin{eqnarray*}
S_x P_y&=& \mathrm{Coker}\left( \bigoplus_{\begin{matrix}\map{u}{x}{z} \\ u\not =1\end{matrix}} u^*: \bigoplus_{\begin{matrix}\map{u}{x}{z} \\ u\not =1\end{matrix}} \mathbb{C}[\Hom_{\mathcal{A}}(z,y)] \rightarrow 0 \right)\\
&=&0.
\end{eqnarray*}
If $\Hom_\mathcal{A}(x,y)\not =\emptyset$, then such $\map{u}{x}{z}$ runs over $\Hom_{\mathcal{A}}(x,y)$ and $$\coprod_{\begin{matrix}\map{u}{x}{z} \\ u\not =1\end{matrix}} \Hom_{\mathcal{A}}(z,y)$$
contains $1_y$, hence we obtain $S_x P_y=0$.
\end{proof}
\end{lemma}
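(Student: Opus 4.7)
The plan is to unwind the definition of $S_x P_y$ by inserting the explicit formula $P_y(z) = \mathbb{C}[\Hom_{\mathcal{A}}(z,y)]$ from the proposition above, and then to do a case analysis on whether $x=y$ and on whether $\Hom_{\mathcal{A}}(x,y)$ is empty. The first bookkeeping step is to note that in an acyclic category the only invertible morphisms are the identities, so the direct sum in the definition of $S_x$ is indexed by the non-identity morphisms $u \colon x \to z$ of $\mathcal{A}$; moreover the induced map $P_y(u^{\mathrm{op}}) \colon \mathbb{C}[\Hom_{\mathcal{A}}(z,y)] \to \mathbb{C}[\Hom_{\mathcal{A}}(x,y)]$ is precomposition with $u$, sending $h \mapsto h \circ u$.

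In the case $x=y$ I would apply acyclicity twice. First, endomorphisms of $x$ are only identities, so $P_x(x)=\mathbb{C}[\{1_x\}]=\mathbb{C}$. Second, any non-identity $u \colon x \to z$ forces $z \neq x$, and the absence of loops of length two then gives $\Hom_{\mathcal{A}}(z,x) = \emptyset$, hence $P_x(z)=0$. The entire domain of the defining map of $S_x P_x$ therefore vanishes, and the cokernel is $\mathbb{C}$.

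In the case $x \neq y$, if $\Hom_{\mathcal{A}}(x,y) = \emptyset$ then $P_y(x)=0$ and the cokernel is trivially $0$. Otherwise every $w \in \Hom_{\mathcal{A}}(x,y)$ is itself a non-identity morphism out of $x$ (since $x \neq y$), so the summand indexed by $u = w$ (with codomain $z=y$) contributes a map $\mathbb{C} = P_y(y) \to P_y(x)$ sending $1_y$ to $1_y \circ w = w$. Every basis element of $P_y(x)$ therefore already lies in the image, the map is surjective, and the cokernel is again $0$.

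The calculation is largely routine; the one subtlety to watch is the interplay between the ``op'' and the formula for $P_y$, which determines whether $P_y(u^{\mathrm{op}})$ is pre- or postcomposition. Getting this backwards would make the surjectivity step in the last case---where one must take $u = w$ itself rather than try to factor $w$ through an intermediate object---look mysterious.
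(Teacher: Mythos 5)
Your proof is correct and follows essentially the same route as the paper: unwind $P_y(z)=\mathbb{C}[\Hom_{\mathcal{A}}(z,y)]$, use acyclicity to make the domain of the defining map vanish when $x=y$ (so the cokernel is $\mathbb{C}[\Hom_{\mathcal{A}}(x,x)]=\mathbb{C}$), and in the case $x\neq y$ with $\Hom_{\mathcal{A}}(x,y)\neq\emptyset$ observe that each $w\colon x\to y$ indexes a summand through which $1_y\mapsto w$, giving surjectivity and hence a zero cokernel. Your remark that the map is precomposition $h\mapsto h\circ u$ matches the paper's $u^*$, so nothing further is needed.
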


Next, we begin with the first part. Let $\mathcal{J}$ be a small category. We construct a projective resolution of the constant functor $\underline{\mathbb{C}}$ in $\mathrm{Func}(\mathrm{Sd}(\mathcal{J}), \mathbb{C}\text{-}\mathrm{vect})$. Let $P(\mathrm{Sd}(\mathcal{J}))_*$ be the sequence
$$\xymatrix{\dots\ar[r]^{\partial_2}&\displaystyle  \bigoplus_{\mathbf{f_1} \in \overline{N_1}(\mathcal{J})} P_{\mathbf{f_1}}\ar[r]^{\partial_1}&\displaystyle  \bigoplus_{\mathbf{f_0} \in \overline{N_0}(\mathcal{J})} P_{\mathbf{f_0}}\ar[r]^(0.65){\partial_0}&\underline{\mathbb{C}}\ar[r]&0\ar[r]&\dots 
}$$
where each $\partial_k$ is defined as follows. For $\mathbf{g}$ of $\overline{N}_k(\mathcal{J})$, we have $$\bigoplus_{\mathbf{f_k} \in \overline{N_k}(\mathcal{J})} P_{\mathbf{f_k}}(\mathbf{g})=\bigoplus_{\mathbf{f_k} \in \overline{N_k}(\mathcal{J})} \mathbb{C}[\Hom_{\mathrm{Sd}(\mathcal{J})} (\mathbf{f_k}, \mathbf{g})].$$
The map $$\lmap{\partial_k(\mathbf{g})}{\bigoplus_{\mathbf{f_k} \in \overline{N_k}(\mathcal{J})} \mathbb{C}[\Hom_{\mathrm{Sd}(\mathcal{J})} (\mathbf{f_{k-1}}, \mathbf{g})]}{\bigoplus_{\mathbf{f_{k-1}} \in \overline{N_{k-1}}(\mathcal{J})} \mathbb{C}[\Hom_{\mathrm{Sd}(\mathcal{J})} (\mathbf{f_{k-1}}, \mathbf{g})]}$$
is defined by
$$\partial_k(\mathbf{g})(\varphi)=\sum_{j\in F(\mathbf{f_k})}(-1)^j \varphi\circ d^j$$ for any $\varphi$ of $\Hom_{\mathrm{Sd}(\mathcal{J})} (\mathbf{f_{k-1}}, \mathbf{g})$ where $$F(\mathbf{f_k})=\{ j\in [k]\mid d_j(\mathbf{f_k}) \in \overline{N_{k-1}} (\mathcal{J})\}.$$ For a morphism $\map{f}{\mathbf{g}}{\mathbf{g'}}$ in $\mathrm{Sd}(\mathcal{J})$, the following diagrams are commutative
$$\xymatrix{\displaystyle \bigoplus_{\mathbf{f_k} \in \overline{N_k}(\mathcal{J})} \mathbb{C}[\Hom_{\mathrm{Sd}(\mathcal{J})} (\mathbf{f_k}, \mathbf{g})]\ar[d]^{f_*}\ar[rr]^{\partial_k(\mathbf{g})}&& \displaystyle \bigoplus_{\mathbf{f_{k-1}} \in \overline{N_{k-1}}(\mathcal{J})} \mathbb{C}[\Hom_{\mathrm{Sd}(\mathcal{J})} (\mathbf{f_{k-1}}, \mathbf{g})] \ar[d]^{f_*}\\
\displaystyle \bigoplus_{\mathbf{f_k} \in \overline{N_k}(\mathcal{J})} \mathbb{C}[\Hom_{\mathrm{Sd}(\mathcal{J})} (\mathbf{f_k}, \mathbf{g'})]\ar[rr]^{\partial_k(\mathbf{g'})}&& \displaystyle \bigoplus_{\mathbf{f_{k-1}} \in \overline{N_{k-1}}(\mathcal{J})} \mathbb{C}[\Hom_{\mathrm{Sd}(\mathcal{J})} (\mathbf{f_{k-1}}, \mathbf{g'})].
}$$
$$\xymatrix{\varphi\ar@{|-{>}}[rr]^{\partial_k(\mathbf{g})}\ar@{|-{>}}[d]^{f_*}&&\sum_{j\in F(\mathbf{f_k})} \varphi\circ d^j \ar@{|-{>}}[d]^{f_*}\\
f\circ \varphi \ar@{|-{>}}[rr]^{\partial_k(\mathbf{g'})}&&\sum_{j\in F(\mathbf{f_k})} f\circ \varphi\circ d^j
}$$
for $\map{\varphi}{\mathbf{f_k}}{\mathbf{g}}$ of $\Hom_{\mathrm{Sd}(\mathcal{J})}(\mathbf{f_k}, \mathbf{g})$.
Therefore, $\partial_k$ is a natural transformation. At $k=0$, $\partial_0$ is the augmentation, that is, for $\mathbf{g}$ of $\overline{N_k}(\mathcal{J})$, 
$$\lmap{\partial_0(\mathbf{g})}{\bigoplus_{\mathbf{f_{0}} \in \overline{N_{0}}(\mathcal{J})} \mathbb{C}[\Hom_{\mathrm{Sd}(\mathcal{J})} (\mathbf{f_{0}}, \mathbf{g})]}{\mathbb{C}}$$
$\partial_0(\mathbf{g})(\varphi)=1$ for any $\varphi$ of $\Hom_{\mathrm{Sd}(\mathcal{J})} (\mathbf{f_{0}}, \mathbf{g})$.

To prove $P(\mathrm{Sd}(\mathcal{J}))_*$ is exact we introduce the notion of \textit{equivalence $n$-simplex}. It is a generalization of a combinatorial $n$-simplex and it is obtained by exclusion and identification of some faces of an $n$-simplex. We prove that an equivalence $n$-simplex generates an acyclic chain complex and this fact implies $P(\mathrm{Sd}(\mathcal{J}))_*$ is exact.

\begin{defi}
Let $n$ be a natural number. Suppose $\sim$ is an equivalence relation on $[n]$ with the property that if $i\sim j$, then $i+1\not = j$ and $i\not = j+1$. Let 
$$A_k^{(n)}=\{(i_0,i_1, \dots , i_k) \in [n]^{k+1}\mid i_0 < \dots <i_k \}$$
and
$$B_k^{(n)}=\{(i_0,i_1, \dots , i_k) \in A_k^{(n)}\mid \exists m \text{ s.t. } i_m \sim i_{m+1}\}$$
and 
$$C_k^{(n)}=(A_k^{(n)}-B_k^{(n)})/\approx$$ 
where $ (i_0,i_1, \dots , i_k) \approx (j_0,j_1, \dots , j_k)$ is defined by $i_m \sim j_m$ for any $m$. 
We call the family $\{C_k^{(n)}\}_{k\ge -1}$ an \textit{equivalence $n$-simplex}. For $k=-1$ let $A_{-1}^{(n)}=C_{-1}^{(n)}=*$ and let $B_{-1}^{(n)}=\emptyset$.

Note that since $A_{n}^{(n)}=\{(0,1,\dots,n)\}$ , the property of the equivalence relation implies $B_{n}^{(n)}=\emptyset$. Hence, $C_{n}^{(n)}=*$.
\end{defi}

\begin{exam}
Suppose $n=2$ and $0\sim 2$. Then, we have
\begin{eqnarray*}
C_0^{(2)}&=&\{ [(0)]=[(2)], [(1)]\} \\
C_1^{(2)}&=&\{ [(0,1)],[(1,2)]\} \\
C_0^{(2)}&=&\{ [(0,1,2)]\}.
\end{eqnarray*}
$\{A_k^{(2)}\}$ and $\{C_k^{(2)}\}$ are visualized as follows.

\input{eq.n-simp}

The left hand side is $\{A_k^{(2)}\}$ and the right hand side is $\{C_k^{(2)}\}$.
\end{exam}

The \textit{face operator} $\map{d_j}{A^{(n)}_k}{A^{(n)}_{k-1}}$ is the map to eliminate the $j$-th coordinate, $$d_j(i_0,i_1,\dots, i_k)=(i_0,\dots,i_{j-1},i_{j+1},\dots, i_k).$$ It is partially defined on $C_k^{(n)}$. We give the definition in the following.

\begin{lemma} Let $\{C_k^{(n)}\}_{k\ge -1}$ be an equivalence $n$-simplex. For $[(i_0,\dots,i_k)]$ of $C_k^{(n)}$, define 
$$F([(i_0,\dots,i_k)])=\{ \ell \in [k] \mid d_{\ell}(i_0,\dots,i_k) \not \in B_{k-1}^{(n)}\}.$$
Then, $F([(i_0,\dots,i_k)])$ does not depend on the choice of the representation of $[(i_0,\dots,i_k)]$.
\begin{proof}
Suppose $(i_0,\dots,i_k) \approx (j_0,\dots, j_k)$. For $\ell$ in $F([(i_0,\dots,i_k)])$, we have $i_{l-1}\not \sim i_{l+1}$. Then we also have $j_{l-1}\not \sim j_{l+1}$, since $j_{l-1}\sim i_{l-1} \not \sim i_{l+1}\sim j_{l+1}.$ Hence, $F([(i_0,\dots,i_k)])$ contains $l$ if and only if $F([(j_0,\dots,j_k)])$ contains $l$,
$$F([(i_0,\dots,i_k)])=F([(j_0,\dots,j_k)]).$$
\end{proof}
\end{lemma}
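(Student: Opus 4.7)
The plan is to reduce well-definedness of $F$ to a single condition on the pair $(i_{\ell-1}, i_{\ell+1})$ that is visibly preserved by the equivalence $\approx$. First I would analyze the effect of the face operator $d_\ell$ on consecutive pairs: removing $i_\ell$ from the strictly increasing tuple $(i_0, \ldots, i_k)$ destroys the consecutive pairs $(i_{\ell-1}, i_\ell)$ and $(i_\ell, i_{\ell+1})$ and replaces them with the single new pair $(i_{\ell-1}, i_{\ell+1})$ when $1 \le \ell \le k-1$; for $\ell = 0$ or $\ell = k$ no new consecutive pair is created, and every other consecutive pair of $d_\ell(i_0, \ldots, i_k)$ already occurs in the original tuple.

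Next I would invoke that $(i_0, \ldots, i_k) \in A_k^{(n)} - B_k^{(n)}$ guarantees that no original consecutive pair is $\sim$-related. Consequently, $d_\ell(i_0, \ldots, i_k)$ can lie in $B_{k-1}^{(n)}$ only through the new pair, i.e.\ only if $1 \le \ell \le k-1$ and $i_{\ell-1} \sim i_{\ell+1}$. This yields the clean characterization
\[
\ell \in F([(i_0, \ldots, i_k)]) \iff \ell \in \{0,k\} \text{ or } (1 \le \ell \le k-1 \text{ and } i_{\ell-1} \not\sim i_{\ell+1}),
\]
which isolates the only place where the chosen representative can possibly matter.

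Finally, if $(i_0, \ldots, i_k) \approx (j_0, \ldots, j_k)$ then $i_m \sim j_m$ for every $m$, so by transitivity of $\sim$, applied along the chain $j_{\ell-1} \sim i_{\ell-1}$ and $i_{\ell+1} \sim j_{\ell+1}$, we obtain $i_{\ell-1} \sim i_{\ell+1}$ if and only if $j_{\ell-1} \sim j_{\ell+1}$. Comparing the characterizations for both representatives gives $F([(i_0, \ldots, i_k)]) = F([(j_0, \ldots, j_k)])$. I do not anticipate any real obstacle: the lemma is essentially a bookkeeping statement, and its substance is just the transitivity of $\sim$ applied to the single new pair produced by the face operator.
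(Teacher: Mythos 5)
Your proposal is correct and follows essentially the same route as the paper: both reduce $\ell \in F([(i_0,\dots,i_k)])$ to the condition $i_{\ell-1}\not\sim i_{\ell+1}$ on the single new consecutive pair created by $d_\ell$ (using that the representative lies outside $B_k^{(n)}$), and then transfer this condition between representatives via transitivity of $\sim$ along $j_{\ell-1}\sim i_{\ell-1}$ and $i_{\ell+1}\sim j_{\ell+1}$. Your write-up merely makes explicit the endpoint cases $\ell\in\{0,k\}$ and the bookkeeping that the paper's terser proof leaves implicit.
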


\begin{defi}
Let $\{C_k^{(n)}\}$ be an equivalence $n$-simplex. For $[(i_0,i_1,\dots, i_k)]$ of $C_k^{(n)}$ and $\ell$ of $F([(i_0,\dots,i_k)])$, define $$d_l([(i_0,i_1,\dots, i_k)])=[d_{\ell}(i_0,\dots,i_k)].$$ 

If $(i_0,\dots, i_k) \approx (j_0,\dots, j_k)$, then $i_m\sim j_m$ for any $m$. So $$(i_0,\dots,i_{\ell-1},i_{\ell+1},\dots, i_k) \approx (j_0,\dots,j_{\ell-1},j_{\ell+1},\dots, j_k).$$ Hence, this map is well-defined.
\end{defi}

\begin{defi}
Let $\{C_k^{(n)}\}_{k\ge -1}$ be an equivalence $n$-simplex. For $0< k$ define $\map{D_k}{\mathbb{C}[C_k^{(n)}]}{\mathbb{C}[C_{k-1}^{(n)}]}$ by
$$D_k([(i_0,\dots,i_k)])=\sum_{j\in F([(i_0,\dots,i_k)])} (-1)^j d_j([(i_0,\dots,i_k)])$$
for any $[(i_0,\dots,i_k)]$ of $C_k^{(n)}$. For $k=0$ define $\map{D_0}{\mathbb{C}[C_0^{(n)}]}{\mathbb{C}}$ to be the augmentation, that is, $$D_0(\sum_{x_i \in C_0^{(n)}} \alpha_i) x_i =  \sum_{x_i \in C_0^{(n)}} \alpha_i .$$
\end{defi}

\begin{prop}
Let $\{C_k^{(n)}\}_{k\ge -1}$ be an equivalence $n$-simplex. Then, $D_{k-1}\circ D_k=0$. Hence,
$$\xymatrix{\dots \ar[r]&0\ar[r]&\mathbb{C} \ar[r]^{D_n}&\mathbb{C}[C_{n-1}^{(n)}]\ar[r]^{D_{n-1}}&\dots  \\
\dots \ar[r]^{D_2}& \mathbb{C}[C_1^{(n)}] \ar[r]^{D_1}& \mathbb{C}[C_0^{(n)}] \ar[r]^{D_0}&\mathbb{C} \ar[r]&0\ar[r]&\dots}$$
 is a chain complex.
\begin{proof}
We prove this claim by comparing this complex with the familiar chain complex $\{ \mathbb{C}[A_k^{(n)}] ,\partial_k\}_{k\ge -1}$ where $\map{\partial_k}{\mathbb{C}[A_k^{(n)}]}{\mathbb{C}[A_{k-1}^{(n)}]}$ is defined by the alternating sum of the face operators, $$\partial_k=\sum_{j=0}^k (-1)^j d_j.$$ This chain complex is isomorphic to the augmented  chain complex of usual $n$-simplex with coefficients in $\mathbb{C}$.

Define a map $\map{p_k}{\mathbb{C}[A_k^{(n)}]}{\mathbb{C}[C_k^{(n)}]}$ by
$$p_k((i_0,\dots, i_k))=\begin{cases} [(i_0,\dots,i_k)] &\text{if}\ (i_0,\dots, i_k) \not \in B_k^{(n)} \\ 0&\text{if} \ (i_0,\dots, i_k)  \in B_k^{(n)}. \end{cases}$$
In particular, define $p_{-1}=1_{\mathbb{C}}$. We show $\{p_k\}:\{\mathbb{C}[A_k^{(n)}] ,\partial_k\}\rightarrow \{\mathbb{C}[C_k^{(n)}] ,D_k\}$ is a chain map. It suffices to show that the following two types of diagrams are commutative 
$$\xymatrix{ \mathbb{C}[A_{k}^{(n)}]\ar[d]^{p_k}\ar[r]^{\partial_k}&\mathbb{C}[A_{k-1}^{(n)}] \ar[d]^{p_{k-1}}&&\mathbb{C}[A_{0}^{(n)}] \ar[d]^{p_0}\ar[r]^{\partial_0} &\mathbb{C} \ar[d]^{1}\\  
\mathbb{C}[C_{k}^{(n)}]\ar[r]^{D_k}&\mathbb{C}[C_{k-1}^{(n)}]&&\mathbb{C}[C_{0}^{(n)}] \ar[r]^{D_0} &\mathbb{C} 
}$$
where $\partial_0$ is the augmentation and $1\le k\le n$.

Since $B_0^{(n)}=\emptyset$, $p_0$ is a natural projection. So $p_0$ does not vanish any elements of $\mathbb{C}[A_0^{(n)}]$. Hence, the diagram of the right hand side is commutative.  

Next we show the commutativity of the left hand side. Take $(i_0,\dots,i_k)$ of $A_k^{(n)}$. Suppose $B_k^{(n)}$ contains it. We have 
\begin{eqnarray*}
D_k\circ p_k ((i_0,\dots ,i_k))&=&D_k(0) \\
&=&0.
\end{eqnarray*}
Since $B_k^{(n)}$ contains $(i_0,\dots,i_k)$, there exists $\ell$ such that $0\le \ell<k$ and $i_{\ell}\sim i_{\ell+1}$. Here, we have to consider two cases,
\begin{enumerate}
\item the existence of such $\ell$ is unique
\item there is another such $\ell'$.
\end{enumerate}
In the first case, 
\begin{eqnarray*}
p_{k-1}\circ \partial_k ((i_0,\dots ,i_k))&=&p_{k-1} \left( \sum^k_{j=0} (-1)^j d_j(i_0,\dots,i_k) \right).
\end{eqnarray*}
For $0\le j\le \ell-1$ or $\ell+2\le j\le k$, $d_j(i_0,\dots ,i_k)$ contains $i_{\ell}$ and $i_{\ell+1}$ which are next to, hence $p_{k-1}(d_j(i_0\dots, i_k))=0. $ Since $i_{\ell}\sim i_{\ell+1}$, $d_{\ell} (i_0,\dots,i_k)\approx d_{\ell+1} (i_0,\dots,i_k)$. This fact implies
\begin{eqnarray*}
p_{k-1}\circ \partial_k ((i_0,\dots ,i_k))&=&p_{k-1} \left( (-1)^{\ell} d_{\ell}(i_0,\dots,i_k) + (-1)^{\ell+1} d_{\ell+1}(i_0,\dots,i_k)\right) \\
&=&(-1)^{\ell} [d_{\ell}(i_0,\dots,i_k)] + (-1)^{\ell+1} [d_{\ell+1}(i_0,\dots,i_k)] \\
&=&0.
\end{eqnarray*}
In the second case, $p_{k-1}$ vanishes all the terms of $\sum^k_{j=0} (-1)^j d_j(i_0,\dots,i_k)$.

If $B_k^{(n)}$ does not contain $(i_0,\dots,i_k)$, it is easy to see $$p_{k-1}\circ \partial_k ((i_0\dots, i_k))=D_k\circ p_k((i_0,\dots, i_k)).$$

Hence, $\{p_k \}$ is a chain map. Since each $p_k$ is a surjection and $\partial_{k-1}\circ \partial_k=0$, we obtain $D_{k-1}\circ D_k=0$.
\end{proof}
\end{prop}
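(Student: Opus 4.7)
The plan is to reduce the identity $D_{k-1} \circ D_k = 0$ to the familiar $\partial_{k-1} \circ \partial_k = 0$ on the augmented simplicial chain complex of the standard $n$-simplex. Concretely, I will build a surjective chain map from $(\mathbb{C}[A_*^{(n)}], \partial_*)$ to $(\mathbb{C}[C_*^{(n)}], D_*)$; surjectivity of each degree-wise map, together with $\partial \circ \partial = 0$, then forces $D \circ D = 0$.

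First I would define $p_k : \mathbb{C}[A_k^{(n)}] \to \mathbb{C}[C_k^{(n)}]$ by sending $(i_0, \ldots, i_k)$ to $[(i_0, \ldots, i_k)]$ when $(i_0, \ldots, i_k) \notin B_k^{(n)}$ and to $0$ otherwise, with $p_{-1} = \mathrm{id}_{\mathbb{C}}$. Surjectivity is immediate from the definition of $C_k^{(n)}$, and the augmentation square commutes trivially because $B_0^{(n)} = \emptyset$, so $p_0$ is a plain quotient.

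The main content is to verify $D_k \circ p_k = p_{k-1} \circ \partial_k$ for $1 \le k \le n$. On a tuple $\sigma = (i_0, \ldots, i_k) \notin B_k^{(n)}$ the verification is essentially bookkeeping: the index set $F([(i_0, \ldots, i_k)])$ used in $D_k$ picks out exactly those $j$ for which $d_j(\sigma) \notin B_{k-1}^{(n)}$, while the remaining terms of $\partial_k(\sigma)$ are precisely the ones that $p_{k-1}$ kills, so both sides agree under the $\approx$-identifications built into $C_{k-1}^{(n)}$.

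The delicate case — and where the real work lies — is when $\sigma \in B_k^{(n)}$, so that $D_k p_k(\sigma) = 0$ and I must establish $p_{k-1}(\partial_k \sigma) = 0$. I would range over the indices $\ell$ with $i_\ell \sim i_{\ell+1}$. If at least two such $\ell$ exist, then every face $d_j(\sigma)$ still contains an adjacent equivalent pair — and here the adjacency hypothesis in the definition of $\sim$ (if $a \sim b$ then $a+1 \ne b$ and $a \ne b+1$) is crucial, since it prevents one pair from being erased in the very act of breaking another — so $p_{k-1}$ annihilates everything. If there is a unique such $\ell$, then for $j \notin \{\ell, \ell+1\}$ the face $d_j(\sigma)$ retains that adjacent equivalent pair and is killed by $p_{k-1}$; the two surviving faces $d_\ell(\sigma)$ and $d_{\ell+1}(\sigma)$ satisfy $d_\ell(\sigma) \approx d_{\ell+1}(\sigma)$ because $i_\ell \sim i_{\ell+1}$, so their signed contributions $(-1)^\ell$ and $(-1)^{\ell+1}$ cancel in $\mathbb{C}[C_{k-1}^{(n)}]$. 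Once $\{p_k\}$ is a chain map, surjectivity combined with $\partial_{k-1} \circ \partial_k = 0$ yields $D_{k-1} \circ D_k = 0$, completing the argument.
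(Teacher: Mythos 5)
Your proposal is correct and follows essentially the same route as the paper: the same projection $p_k$ onto $\mathbb{C}[C_k^{(n)}]$ killing $B_k^{(n)}$, the same case analysis (tuple not in $B_k^{(n)}$; unique adjacent equivalent pair, where $d_\ell \approx d_{\ell+1}$ gives the signed cancellation; several such pairs, where every face is annihilated), and the same conclusion from surjectivity of $p_k$ together with $\partial_{k-1}\circ\partial_k=0$. One small aside: in the several-pairs case the survival of an adjacent equivalent pair in each face comes from the fact that deleting a single index cannot separate an adjacent pair (together with transitivity of $\sim$ when the two pairs overlap), not really from the hypothesis that $i\sim j$ forbids $j=i\pm 1$.
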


\begin{prop}\label{DMT}
$$H_m (\{C_k^{(n)} , D_k\}_{k\ge -1})=0$$
for any $m$.
\begin{proof}
Define a contracting homotopy $\map{h_k}{\mathbb{C}[C_k^{(n)}]}{\mathbb{C}[C_{k+1}^{(n)}]}$ by 
$$h_k([(i_0,\dots, i_k)])=\begin{cases}[(0,i_0,\dots, i_k)]&\text{if } 0\not \sim i_0 \\
0&\text{if } 0\sim i_0.
\end{cases}$$
In particular, for $k=-1$ define $\map{h_{-1}}{\mathbb{C}}{\mathbb{C}[C_{0}^{(n)}]}$ by $h_{-1}(*)=[(0)]$. Then we have the following diagram
$$\xymatrix{
\dots\ar[r]&\mathbb{C}[C_{2}^{(n)}]\ar[d]^1\ar[r]^{D_2}\ar[dl]_{h_{2}}&\mathbb{C}[C_{1}^{(n)}]\ar[d]^1\ar[dl]_{h_{1}}\ar[r]^{D_1}&\mathbb{C}[C_{0}^{(n)}]\ar[d]^1\ar[dl]_{h_{0}}\ar[r]^{D_0}&\mathbb{C}\ar[r]\ar[d]^1\ar[dl]_{h_{-1}}&0\ar[r]\ar[d]\ar[dl]&\dots \\
\dots\ar[r]&\mathbb{C}[C_{2}^{(n)}]\ar[r]^{D_2}&\mathbb{C}[C_{1}^{(n)}]\ar[r]^{D_1}&\mathbb{C}[C_{0}^{(n)}]\ar[r]^{D_0}&\mathbb{C}\ar[r]&0\ar[r]&\dots .
}$$ We have 
\begin{eqnarray*}
D_0\circ h_{-1}(1)&=& D_0[(0)]\\
&=&1.
\end{eqnarray*}
For $0\le k\le n$, we show $h_{k-1}\circ D_k+D_{k+1}\circ h_k=1$. Take an element $[(i_0,\dots, i_k)]$ of $C_k^{(n)}$. If $i_0\sim 0$, then
\begin{multline}
(h_{k-1}\circ D_k+D_{k+1}\circ h_k)([(i_0,\dots, i_k)])=h_{k-1}\circ D_k([(i_0,\dots, i_k)])  \\
=h_{k-1}\left( \sum_{j\in F([(i_0,\dots, i_k)])} (-1)^j d_j[(i_0,\dots,i_k)] \right) \label{homotopy}
\end{multline}
Here, for $0<j \in F([(i_0,\dots, i_k)])$, we have
\begin{eqnarray*}
h_{k-1}( (-1)^j d_j [ (i_0, \dots, i_k) ])&=&h_{k-1} ((-1)^j [ (i_0, \dots,i_{j-1},i_{j+1},\dots, i_k) ])\\
&=&0.
\end{eqnarray*}
Since $i_0\not \sim i_1$, we have $i_1\not \sim 0$. Thus, the equation $(\ref{homotopy})$ is 
\begin{eqnarray*}
h_{k-1}( (-1)^0 d_0 [ (i_0, \dots, i_k) ])&=&h_{k-1}( [ (i_1, \dots, i_k) ])\\
&=&[ (0,i_1, \dots, i_k) ] \\
&=&[ (i_0,i_1, \dots, i_k) ].
\end{eqnarray*}
If $i_0\not \sim 0$, then we have
\begin{eqnarray*}
D_{k+1}\circ h_k ([(i_0,\dots, i_k)])&=&D_{k+1}([(0,i_0,\dots, i_k)]) \\
&=&\sum_{j\in F([(0, i_0,\dots, i_k)])} (-1)^j d_j[(0, i_0,\dots,i_k)]
\end{eqnarray*}
and 
\begin{eqnarray*}
h_{k-1}\circ D_k ([(i_0,\dots, i_k)])&=&h_{k-1}\left( \sum_{j\in F([(i_0,\dots, i_k)])} (-1)^j d_j[(i_0,\dots,i_k)] \right).
\end{eqnarray*}
For $j>0$, $D([(i_0,\dots, i_k)])$ contains $j$ if and only if $D([(0, i_0,\dots, i_k)])$ contains $j+1$. Thus, we obtain
\begin{eqnarray*}
(h_{k-1}\circ D_k+D_{k+1}\circ h_k=1)([(i_0,\dots, i_k)])&=& (-1)^0 d_0[(0, i_0,\dots, i_k)]  \\
&=& [(i_0,\dots, i_k)].
\end{eqnarray*}
We conclude $\{C_k^{(n)},D_k\}$ is a split exact sequence.
\end{proof}
\end{prop}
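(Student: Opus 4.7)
The plan is to show the complex is chain-contractible by building an explicit contracting homotopy that ``cones with the vertex $0$''. Concretely, set $h_{-1}(1) = [(0)]$, and for $k \geq 0$ define $\lmap{h_k}{\mathbb{C}[C_k^{(n)}]}{\mathbb{C}[C_{k+1}^{(n)}]}$ by
$$h_k([(i_0, \dots, i_k)]) = \begin{cases} [(0, i_0, \dots, i_k)] & \text{if } i_0 > 0 \text{ and } 0 \not\sim i_0, \\ 0 & \text{otherwise}. \end{cases}$$
Well-definedness on $\approx$-classes is immediate, since $i_0 \sim j_0$ whenever $(i_0,\dots,i_k) \approx (j_0,\dots,j_k)$, so both the defining condition and the output class are preserved under equivalence.

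The core step is to verify the contraction identity $D_{k+1} h_k + h_{k-1} D_k = \mathrm{id}$ (together with $D_0 h_{-1} = \mathrm{id}$ on the bottom $\mathbb{C}$), which immediately yields $H_m = 0$ for all $m$. I would split into two cases on a representative $(i_0, \dots, i_k) \notin B_k^{(n)}$. In the case $i_0 \sim 0$, we have $h_k[(i_0,\dots,i_k)] = 0$, and in $h_{k-1} D_k$ only the $j = 0$ term survives: for $j \geq 1$ the face $d_j$ still has $i_0$ as first coordinate, hence is killed by $h_{k-1}$. The surviving term produces $[(0, i_1, \dots, i_k)]$, which equals $[(i_0, i_1, \dots, i_k)]$ under $\approx$ because $i_0 \sim 0$ and all other entries agree; one also checks that $(0,i_1,\dots,i_k) \notin B_k^{(n)}$, using $i_0 \not\sim i_1$ together with $i_0 \sim 0$ to deduce $0 \not\sim i_1$.

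In the case $i_0 \not\sim 0$, we have $D_{k+1} h_k[(i_0,\dots,i_k)] = D_{k+1}[(0, i_0, \dots, i_k)]$. The summand at $j = 0$ equals $+[(i_0, \dots, i_k)]$, supplying the desired identity. The remaining summands for $j \geq 1$ match, under the shift $j \mapsto j-1$, with the surviving terms of $h_{k-1} D_k[(i_0, \dots, i_k)]$ and carry opposite signs, so they cancel pairwise. The main obstacle is the bookkeeping here: one must check that membership in $F([(0, i_0, \dots, i_k)])$ at index $j+1$ corresponds exactly to membership in $F([(i_0, \dots, i_k)])$ at index $j$, after accounting for the single new adjacency $(0, i_0)$ introduced by coning. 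Since $0 \not\sim i_0$ by hypothesis, this new adjacency never places a face in $B$, and every other new adjacency produced by removing an interior vertex is the same one that would be produced in the unconed tuple; a direct comparison confirms the correspondence. The only delicate sub-case is $0 \sim i_1$, where both the $j = 1$ summand of $D_{k+1} h_k$ and the $j = 0$ summand of $h_{k-1} D_k$ drop out simultaneously, so cancellation is still preserved. With the identity established, acyclicity follows.
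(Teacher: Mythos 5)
Your proposal is correct and follows essentially the same route as the paper: the identical contracting homotopy that cones with the vertex $0$ (sending a class to $[(0,i_0,\dots,i_k)]$ when $0\not\sim i_0$ and to $0$ otherwise, with $h_{-1}(1)=[(0)]$), verified by the same case split on whether $i_0\sim 0$. Your explicit treatment of the sub-case $0\sim i_1$ is a slightly more careful bookkeeping of a point the paper passes over quickly, but the argument is the same.
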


This result is a homological interpretation of Proposition 4.6 of \cite{Nog11} which proved the \textit{reduced Euler characteristic} of an equivalence $n$-simplex $\{C_k^{(n)}\}$ is zero, that is, $$\chi(\{C_k^{(n)}\})=\sum^n_{k=-1} (-1)^k \# C_k^{(n)}=0.$$

\begin{prop}\label{PR}
For a small category $\mathcal{J}$, $P(\mathrm{Sd}(\mathcal{J}))_*$ is a projective resolution of $\underline{\mathbb{C}}$ in $\mathrm{Func}(\mathrm{Sd}(\mathcal{J}), \mathbb{C}\text{-}\mathrm{vect})$.
\begin{proof}
Since each $P_{\mathbf{f}}$ is projective for any object $\mathbf{f}$ of $\mathrm{Sd}(\mathcal{J})$, $\bigoplus_{\mathbf{f_k} \in \overline{N_k}(\mathcal{J})} P_{\mathbf{f_k}}$ is also projective for any $k$. Next we show exactness of $P(\mathrm{Sd}(\mathcal{J}))_*$. Note that $P(\mathrm{Sd}(\mathcal{J}))_*$ is exact if and only if each $P(\mathrm{Sd}(\mathcal{J}))_*(\mathbf{g})$ is exact for any $\mathbf{g}$ of $\overline{N_n}(\mathcal{J})$. Take $\mathbf{g}$ of $\overline{N_n}(\mathcal{J})$ and define an equivalence relation $\sim_{\mathbf{g}}$ on $[n]$ by $i\sim j$ if 
$$\mathbf{g}(\min\{i,j\}\rightarrow \min\{i,j\})=1.$$
Then, $\sim_{\mathbf{g}}$ is an equivalence relation and it satisfies $i\not\sim i+1$ for any $i$. For this equivalence relation, we obtain an equivalence $n$-simplex and its chain complex $\{C_k^{(n)}, D_k\}$. Then, the chain complex is isomorphic to $P(\mathrm{Sd}(\mathcal{J}))_*(\mathbf{g})$. Define two maps
$$\lmap{\varphi_k}{C_k^{(n)}}{\coprod_{\mathbf{g} \in \overline{N_k}(\mathcal{J})} \Hom_{\mathrm{Sd}(\mathcal{J})} (\mathbf{f_k}, \mathbf{g})}$$
$$\lmap{\psi_k}{\coprod_{\mathbf{g} \in \overline{N_k}(\mathcal{J})} \Hom_{\mathrm{Sd}(\mathcal{J})} (\mathbf{f_k}, \mathbf{g})}{C_k^{(n)}}$$
by $$\varphi_k([(i_0,\dots,i_k)]):[k]\longrightarrow[n]$$
$$\varphi_k([(i_0,\dots,i_k)])(j)=i_j$$
and
$$\psi_k(\alpha)=[(\alpha(0),\dots, \alpha(k))]$$
for any $[(i_0,\dots, i_k)]$ of $C_k^{(n)}$ and any $\map{\alpha}{\mathbf{f}_k}{\mathbf{g}}$. In general, a morphism $\map{f}{X}{Y}$ in $\mathrm{Sd}(\mathcal{J})$ satisfies $X=Y\circ f$, so $Y$ and $f$ determine $X$. Thus, the order-preserving injection $\varphi_k([(i_0,\dots,i_k)])$ and $\mathbf{g}$ determine the domain of the map $\map{\varphi_k([(i_0,\dots,i_k)])}{?}{\mathbf{g}}$. Then, $\varphi_k$ and $\psi_k$ are well-defined.
Indeed, if $\map{\alpha_1\sim \alpha_2}{\mathbf{f_k}}{\mathbf{g}}$, then $$\mathbf{g}(\min\{\alpha_1(i),\alpha_2(i)\}\rightarrow \min\{\alpha_1(i),\alpha_2(i)\})=1)$$
for any $i$, that is, $\alpha_1(i)\sim_{\mathbf{g}} \alpha_2(i)$. Hence, 
\begin{eqnarray*}
\psi_k(\alpha_1)&=&[(\alpha_1(0),\dots,\alpha_1(k))] \\
&=&[(\alpha_2(0),\dots,\alpha_2(k))] \\
&=&\psi(\alpha_2).
\end{eqnarray*}
If $[(i_0,\dots, i_k)]=[(j_0,\dots, j_k)]$, then $i_{\ell}\sim_{\mathbf{g}} j_{\ell}$ for any $\ell$. So we have $$\mathbf{g}\left( \min \{ i_{\ell},j_{\ell} \}\rightarrow \max \{ i_{\ell},j_{\ell} \}\right),$$
we have $\varphi_k([(i_0,\dots,i_k)])\sim \varphi_k([(j_0,\dots,j_k)])$. It is clear that $\varphi\circ \psi=1$ and $\psi\circ\varphi=1$. Moreover, $\{\varphi_k\}$ is compatible with the differentials, so $\{\varphi_k\}$ is a chain map. Hence, $P(\mathrm{Sd}(\mathcal{J}))_*(\mathbf{g})$ is isomorphic to $\{C_k^{(n)},D_k\}$. Proposition \ref{DMT} implies $\{C_k^{(n)},D_k\}$ is exact, so is  $P(\mathrm{Sd}(\mathcal{J}))_*(\mathbf{g})$.
\end{proof} 
\end{prop}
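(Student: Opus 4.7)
The plan splits into two independent tasks: showing projectivity termwise, and showing exactness. Projectivity is immediate: by the corollary preceding Convention \ref{P_x}, each $P_{\mathbf{f_k}} = i_{\mathbf{f_k}}^\dagger(\mathbb{C})$ is projective in $\mathrm{Func}(\mathrm{Sd}(\mathcal{J}), \mathbb{C}\text{-}\mathrm{vect})$, and arbitrary direct sums of projectives in this functor category are again projective since epimorphisms and hence the lifting property can be checked objectwise in $\mathbb{C}$-vect.

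For exactness, I would use that exactness in the functor category is pointwise, so it suffices to verify that for each object $\mathbf{g} \in \overline{N_n}(\mathcal{J})$ the complex of vector spaces
$$P(\mathrm{Sd}(\mathcal{J}))_*(\mathbf{g}) \;=\; \Bigl(\cdots \to \bigoplus_{\mathbf{f_k} \in \overline{N_k}(\mathcal{J})} \mathbb{C}[\Hom_{\mathrm{Sd}(\mathcal{J})}(\mathbf{f_k},\mathbf{g})] \to \cdots \to \mathbb{C}\Bigr)$$
is acyclic. The main idea is to exhibit this complex as the chain complex $\{\mathbb{C}[C_k^{(n)}], D_k\}$ of a suitable equivalence $n$-simplex, after which Proposition \ref{DMT} closes the argument.

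To build the equivalence $n$-simplex, define a relation $\sim_{\mathbf{g}}$ on $[n]$ by $i \sim_{\mathbf{g}} j$ iff $\mathbf{g}(\min\{i,j\} \to \max\{i,j\})$ is an identity morphism. One checks routinely that this is an equivalence relation, and the hypothesis $\mathbf{g} \in \overline{N_n}(\mathcal{J})$ (none of the arrows in $\mathbf{g}$ is an identity) gives the required non-adjacency $i \not\sim_{\mathbf{g}} i+1$. This yields the equivalence $n$-simplex $\{C_k^{(n)}\}$ with its chain complex $\{\mathbb{C}[C_k^{(n)}], D_k\}$.

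The key step is the chain isomorphism. I would construct
$$\varphi_k: C_k^{(n)} \longrightarrow \coprod_{\mathbf{f_k} \in \overline{N_k}(\mathcal{J})} \Hom_{\mathrm{Sd}(\mathcal{J})}(\mathbf{f_k}, \mathbf{g}), \qquad \psi_k: \coprod_{\mathbf{f_k}} \Hom_{\mathrm{Sd}(\mathcal{J})}(\mathbf{f_k}, \mathbf{g}) \longrightarrow C_k^{(n)},$$
by sending $[(i_0,\dots,i_k)]$ to the order-preserving injection $j \mapsto i_j$ (whose domain $\mathbf{f_k} = \mathbf{g} \circ \varphi_k([(i_0,\dots,i_k)])$ is forced by the condition $\mathbf{g} \circ \alpha = \mathbf{f_k}$ defining morphisms in $\mathrm{Sd}(\mathcal{J})$), and conversely $\alpha \mapsto [(\alpha(0),\dots,\alpha(k))]$. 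Well-definedness is the first subtlety: the equivalence of morphisms in $\mathrm{Sd}(\mathcal{J})$ must correspond exactly to the relation $\approx$ on tuples, which follows directly by translating the defining condition of $\sim$ in $\mathrm{Sd}(\mathcal{J})$ into the relation $\sim_{\mathbf{g}}$. The second subtlety is verifying that $\varphi_k$ intertwines the differentials; this reduces to checking that the index sets $F([(i_0,\dots,i_k)])$ and $F(\mathbf{f_k})$ (governing which faces are non-degenerate) correspond under $\varphi_k$, which holds because omitting the $j$-th entry of $\mathbf{f_k}$ produces a non-degenerate chain precisely when $i_{j-1} \not\sim_{\mathbf{g}} i_{j+1}$. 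The hardest bookkeeping will be checking the edge cases in this compatibility, especially at $k=0$ where $\partial_0$ is the augmentation, but no new idea is required beyond unwinding definitions.
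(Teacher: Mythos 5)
Your proposal is correct and follows essentially the same route as the paper: termwise projectivity from the corollary on $i_x^\dagger(\mathbb{C})$, pointwise exactness reduced to the chain complex of the equivalence $n$-simplex attached to $\sim_{\mathbf{g}}$ via the same maps $\varphi_k,\psi_k$, and Proposition \ref{DMT} to conclude. Your added remark that compatibility with the differentials comes from matching the index sets $F([(i_0,\dots,i_k)])$ and $F(\mathbf{f_k})$ is exactly the point the paper leaves implicit.
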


Finally, we give a proof of Theorem \ref{moto-main}.

\begin{proof}[Proof of Theorem \ref{moto-main}]
To compute $\chi^{(2)}(\mathrm{Sd}(\mathcal{I})^{\mathrm{op}})$ we work on the category $$\mathrm{Func}((\mathrm{Sd}(\mathcal{J})^{\mathrm{op}})^{\mathrm{op}}, \mathbb{C}\text{-}\mathrm{vect})=\mathrm{Func}(\mathrm{Sd}(\mathcal{J}), \mathbb{C}\text{-}\mathrm{vect}).$$
We have the projective resolution $P(\mathrm{Sd}(\mathcal{J}))_*$ of the constant functor $\underline{\mathbb{C}}$. Since $\mathrm{Sd}(\mathcal{J})$ is acyclic, so is  $\mathrm{Sd}(\mathcal{J})^{\mathrm{op}}$. Hence, we can apply Lemma \ref{AS}. Since the splitting functor preserves direct sums, for any object $\mathbf{f}$ of $\mathrm{Sd}(\mathcal{J})$ we obtain $$S_{\mathbf{f}}P(\mathrm{Sd}(\mathcal{J}))_*=\xymatrix{\dots\ar[r]&0\ar[r]&\mathbb{C}\ar[r]&0\ar[r]&\dots}$$ where $\mathbb{C}$ is only in the dimension $L(\mathbf{f})$ length $\mathbf{f}$. 
Since $\mathrm{Sd}(\mathcal{J})^{\mathrm{op}}$ is acyclic, $\mathrm{Aut}(\mathbf{f})$ is trivial, hence the tensor operation $-\bigotimes_{\mathbb{C}[\mathbf{f}]} \mathcal{N}(\mathbf{f})$ is trivial. Thus, we have
\begin{eqnarray*}
h^{(2)}\left( S_{\mathbf{f}} P(\mathrm{Sd}(\mathcal{J}))_*  \bigotimes_{\mathbb{C}[\mathbf{f}]} \mathcal{N}(\mathbf{f}) \right) &= &h^{(2)}\left( S_{\mathbf{f}} P(\mathrm{Sd}(\mathcal{J}))_*   \right) \\
&=&h^{(2)}\sum_{n\ge 0} \dim_{\mathcal{N}(\mathbf{f})} \left( S_{\mathbf{f}} P(\mathrm{Sd}(\mathcal{J}))_n \right)\\
&=&1.
\end{eqnarray*}
Note that $\dim_{\mathcal{N}(\mathbf{f})}$ is just the dimension as $\mathbb{C}$-vector spaces. We obtain
\begin{eqnarray}
h^{(2)}(\mathrm{Sd}(\mathcal{J})^{\mathrm{op}})&=&\sum_{\mathbf{f} \in \mathrm{Ob}(\mathrm{Sd}(\mathcal{J})^{\mathrm{op}})} h^{(2)}\left( S_{\mathbf{f}} P(\mathrm{Sd}(\mathcal{J}))_* \bigotimes_{\mathbb{C}[\mathbf{f}]} \mathcal{N}(\mathbf{f}) \right) \nonumber \\
&=&\sum_{\mathbf{f} \in \mathrm{Ob}(\mathrm{Sd}(\mathcal{J})^{\mathrm{op}})} 1 \nonumber \\
&=&\sum_{\mathbf{f} \in \mathrm{Ob}(\mathrm{Sd}(\mathcal{J}))} 1 \nonumber \\
&=& \sum_{n=0}^\infty \# \overline{N_n}(\mathcal{J}). \label{converges}
\end{eqnarray}
The series $(\ref{converges})$ converges if and only if each $\overline{N_n}(\mathcal{J})$ is finite and there exists sufficiently large $M$ such that $\overline{N_n}(\mathcal{J})=\emptyset$ for $n>M$. Thus, Lemma \ref{iff} proves the first claim.

If $\mathcal{J}$ is finite acyclic, the series $(\ref{converges})$ converges, hence $\mathrm{Sd}(\mathcal{J})^{\mathrm{op}}$ is of type $(L^2)$. We have
\begin{eqnarray*}
\chi^{(2)}(\mathrm{Sd}(\mathcal{J})^{\mathrm{op}})&=&\sum_{\mathbf{f} \in \mathrm{Ob}(\mathrm{Sd}(\mathcal{J})^{\mathrm{op}})} \chi^{(2)}\left( S_{\mathbf{f}} P(\mathrm{Sd}(\mathcal{J}))_* \bigotimes_{\mathbb{C}[\mathbf{f}]} \mathcal{N}(\mathbf{f}) \right)  \\
&=&\sum_{\mathbf{f} \in \mathrm{Ob}(\mathrm{Sd}(\mathcal{J})^{\mathrm{op}})} (-1)^{L(\mathbf{f})}  \\
&=& \sum_{n=0}^M (-1)^n\# \overline{N_n}(\mathcal{J}) \\
&=& \chi_L (\mathcal{J})
\end{eqnarray*}
for sufficiently large $M$. Lemma 7.3 of \cite{FLS} implies $\chi_L(\mathcal{J})=\chi^{(2)}(\mathcal{J})$. We conclude $$\chi^{(2)}(\mathrm{Sd}(\mathcal{J})^{\mathrm{op}})=\chi^{(2)}(\mathcal{J}).$$
\end{proof}

\subsection{The extended $L^2$-Euler characteristic}

In this subsection, we extend the definition of the $L^2$-Euler characteristic. As we have seen, the $L^2$-Euler characteristic is the invariant under the barycentric subdivision only for finite acyclic categories. We show the extended $L^2$-Euler characteristic is invariant under the barycentric subdivision for a wider class of finite categories, that is, the class for which the series Euler characteristic can be defined.


\begin{defi}
A small category $\mathcal{J}$ is called \textit{of type extended $(L^2)$} if for some projective resolution $P_*$ of the constant functor $\underline{\mathbb{C}}$ in $\mathrm{Func}(\mathcal{J}^{\mathrm{op}}, \mathbb{C}\text{-}\mathrm{vect}),$
$$h^{(2)}_n(\mathcal{J})=\sum_{[x]\in \text{iso($\mathcal{J})$}} h^{(2)} (S_x P_n \otimes_{\mathbb{C}[x]} \mathcal{N}(x)) $$
converges, the radius of convergence $\rho$ of the power series with complex variable
$$f^{(2)}_{\mathcal{J}}(z)=\sum^{\infty}_{n=0} h^{(2)}_n (\mathcal{J}) z^n $$
is not zero, there exist a real number $\varepsilon$ and a function $g$ such that
\begin{enumerate}
\item $\varepsilon \in (1,\infty]$
\item $g$ has finitely many poles except for $-1$ on $U(0;\varepsilon)$  with the center 0
\item $g$ is holomorphic in the open ball $U(0;\varepsilon)$ except for its poles
\item $g(z)\equiv f^{(2)}_{\mathcal{J}}(z)$ on $U(0;\varepsilon)$.
\end{enumerate}
Then, define \textit{the extended $L^2$-Euler characteristic} $\chi^{(2)}_{\mathrm{ex}}(\mathcal{J})$ of  $\mathcal{J}$ by
$$\chi^{(2)}_{\mathrm{ex}}(\mathcal{J})=g(-1).$$
\end{defi}

If there exist another $\varepsilon'$ and $g'$, then the uniqueness of the analytic continuity assures $g(z)\equiv g'(z)$ in $U(0;\min\{\varepsilon,\varepsilon' \})$. So this definition is well-defined. 

\begin{prop}
If a small category $\mathcal{J}$ is of type $(L^2)$, then $\mathcal{J}$ is of type extended $(L^2)$.
\begin{proof}
Since $\mathcal{J}$ is of type $(L^2)$, the series $h^{(2)}(\mathcal{J})$ converges absolutely. Hence, each $h_n^{(2)}(\mathcal{J})$ also converges absolutely. For a complex number $z_0$ such that $|z_0|=1$, we have
\begin{eqnarray*}
\sum^{\infty}_{n=0}|h^{(2)}_n z_0|&=&\sum^{\infty}_{n=0}|h^{(2)}_n|| z_0| \\
&=&\sum^{\infty}_{n=0} h^{(2)}_n <\infty
\end{eqnarray*}
Thus, $\sum^{\infty}_{n=0} h^{(2)}_n z_0$ converges absolutely. Hence, the radius of convergence $\rho$ of $f_{\mathcal{J}}^{(2)}(z)$ is lager than 1. Since $f_{\mathcal{J}}^{(2)}(z)$ is the power series, it is holomorphic on $U(0;\rho)$. Hence, $\mathcal{J}$ is of type extended $(L^2)$.
\end{proof}
\end{prop}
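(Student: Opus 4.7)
The plan is to verify the four defining conditions of type extended $(L^2)$ directly from the hypothesis that $\mathcal{J}$ is of type $(L^2)$, reusing the same projective resolution $P_*$ throughout. Being of type $(L^2)$ says
$$h^{(2)}(\mathcal{J}) = \sum_{[x]\in \mathrm{iso}(\mathcal{J})} h^{(2)}\bigl(S_x P_* \otimes_{\mathbb{C}[x]} \mathcal{N}(x)\bigr) < \infty.$$
Because $L^2$-Betti numbers are non-negative and $h^{(2)}(C_*) = \sum_p b^{(2)}_p(C_*)$, Tonelli allows one to reorder the double sum and obtain $\sum_{n=0}^\infty h^{(2)}_n(\mathcal{J}) = h^{(2)}(\mathcal{J}) < \infty$. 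In particular each $h^{(2)}_n(\mathcal{J})$ is finite, which is the first requirement.

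Next I would bound the radius of convergence. For every $z$ with $|z| \leq 1$, comparison gives $|h^{(2)}_n z^n| \leq h^{(2)}_n$, and the Weierstrass $M$-test then forces absolute (indeed uniform) convergence of $f^{(2)}_{\mathcal{J}}(z)$ on the closed unit disk. Consequently the radius of convergence $\rho$ satisfies $\rho \geq 1$, in particular $\rho \neq 0$, which is the second requirement. For the remaining conditions I would simply take $g := f^{(2)}_{\mathcal{J}}$: as a convergent power series it is holomorphic on $U(0;\rho)$ with no poles whatsoever, so clauses (2)--(4) reduce to producing some $\varepsilon \in (1, \infty]$ with $g$ holomorphic on $U(0;\varepsilon)$.

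The principal obstacle is the gap between $\rho \geq 1$ and the strict requirement $\varepsilon > 1$. The estimate above does not by itself force $\rho > 1$: for instance $h^{(2)}_n = 1/n^2$ satisfies $\sum h^{(2)}_n < \infty$ yet gives $\rho = 1$. I would bridge this either by reading clauses (3)--(4) as asserting that $g$ is an analytic continuation of $f^{(2)}_{\mathcal{J}}$ past $|z|=1$---which is available since uniform convergence on the closed unit disk provides continuous boundary data---or by a sharper Cauchy--Hadamard argument exploiting that absolute convergence already holds on the boundary circle. This strict-inequality step is the delicate point of the proof, while the rest of the argument is routine bookkeeping.
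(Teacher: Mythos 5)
Your plan follows the same route as the paper's own proof: take $g=f^{(2)}_{\mathcal{J}}$, use summability of the non-negative coefficients $h^{(2)}_n(\mathcal{J})$ to control the radius of convergence, and note that a power series is holomorphic and pole-free on its disc of convergence. The genuine gap is exactly the one you flag, and neither of your proposed bridges closes it. Uniform convergence on the closed unit disc gives continuity of $f^{(2)}_{\mathcal{J}}$ up to the boundary, but continuous boundary values do not in general yield analytic continuation: $\sum_{n\ge 1}z^n/n^2$ has absolutely summable coefficients yet a logarithmic branch point (not a pole) at $z=1$, so it agrees with no function meromorphic on any $U(0;\varepsilon)$ with $\varepsilon>1$; worse, the lacunary series $\sum_k 2^{-k}z^{2^k}$ has summable non-negative coefficients and the unit circle as a natural boundary, so no admissible $\varepsilon$ and $g$ exist at all for such coefficient data. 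Likewise there is no ``sharper Cauchy--Hadamard'' statement: absolute convergence on $|z|=1$ is perfectly consistent with $\rho=1$. Since your argument uses nothing about the $h^{(2)}_n(\mathcal{J})$ beyond non-negativity and summability, it cannot exclude these behaviours, and so as written it does not establish the strict requirement $\varepsilon\in(1,\infty]$ in the definition of type extended $(L^2)$.

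For comparison, the paper's proof is your argument with the delicate step asserted rather than argued: from absolute convergence of $\sum_n h^{(2)}_n z_0^n$ at $|z_0|=1$ it simply declares that the radius of convergence is larger than $1$ and takes $g=f^{(2)}_{\mathcal{J}}$ on $U(0;\rho)$; there is no further idea there that you have missed, so you stalled precisely where the paper is silently optimistic. One additional point to watch: your Tonelli step asserts $\sum_n h^{(2)}_n(\mathcal{J})=h^{(2)}(\mathcal{J})$, but $h^{(2)}_n(\mathcal{J})$ is built from the modules $S_xP_n\otimes_{\mathbb{C}[x]}\mathcal{N}(x)$, whereas $h^{(2)}(\mathcal{J})$ sums von Neumann dimensions of the homology of the complexes $S_xP_*\otimes_{\mathbb{C}[x]}\mathcal{N}(x)$. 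Since the dimension of homology is at most the dimension of the corresponding chain module, reordering only yields $h^{(2)}(\mathcal{J})\le\sum_n h^{(2)}_n(\mathcal{J})$, which is the wrong direction for deducing that the $h^{(2)}_n(\mathcal{J})$ are finite and summable; the paper's first line makes the same jump, so here too the intended reading of the definitions, not an argument, is doing the work.
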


\begin{them}
Suppose $\mathcal{I}$ is a finite category. Then, there exists the extended $L^2$-Euler characteristic $\chi_{\mathrm{ex}}^{(2)} (\mathrm{Sd}(\mathcal{I})^{\mathrm{op}})$ of $\mathrm{Sd}(\mathcal{I})^{\mathrm{op}}$ if and only if its series Euler characteristic $\chi_{\sum}(\mathcal{I})$ exists, in which case, we obtain
$$\chi_{\sum}(\mathcal{I})=\chi_{\mathrm{ex}}^{(2)} (\mathrm{Sd}(\mathcal{I})^{\mathrm{op}}).$$
\begin{proof}
By the first half of the proof of Theorem \ref{moto-main}, we obtain
$$f^{(2)}_{\mathrm{Sd}(\mathcal{I})^{\mathrm{op}}}(z)=\sum^{\infty}_{n=0}\#\overline{N_n}(\mathcal{I}) z^n.$$
The number $\#\overline{N_n}(\mathcal{I}) $ can be expressed by using matrices, that is, $\#\overline{N_n}(\mathcal{I}) =\mathrm{sum}(Z_{\mathcal{I}}-E )$. Since entries of $(Z_{\mathcal{I}}-E )$ are natural numbers, we obtain
$$\#\overline{N_n}(\mathcal{I})=\mathrm{sum}\{(Z_{\mathcal{I}}-E )^n\}\le \{ \mathrm{sum}(Z_{\mathcal{I}}-E )\}^n .$$
Hence, we have
\begin{eqnarray}
\sum^{\infty}_{n=0} | \#\overline{N_n}(\mathcal{I}) z^n |&=&\sum^{\infty}_{n=0}  \#\overline{N_n}(\mathcal{I})| z^n | \nonumber \\
&\le & \sum^{\infty}_{n=0} \{ \mathrm{sum}(Z_{\mathcal{I}}-E )\}^n | z^n | \label{series}
\end{eqnarray}
For $0\le z_0 < \frac{1}{\mathrm{sum}(Z_{\mathcal{I}}-E )}$, the series $(\ref{series})$ converges, hence $f^{(2)}_{\mathrm{Sd}(\mathcal{I})^{\mathrm{op}}}(z_0)$ also converges. So the radius of convergence of $f^{(2)}_{\mathrm{Sd}(\mathcal{I})^{\mathrm{op}}} (t)$ is not zero.

By Theorem 2.2 of \cite{Leib}, it follows that $f^{(2)}_{\mathrm{Sd}(\mathcal{I})^{\mathrm{op}}}$ has the rational expression 
$$ f_{\mathrm{Sd}(\mathcal{I})^{\mathrm{op}}}^{(2)}(z) =\frac{\text{sum}(\text{adj}(E-(Z_{\mathcal{I}}-E)z ))}{\text{det}(E-(Z_{\mathcal{I}}-E)z )}.$$
$f^{(2)}_{\mathrm{Sd}(\mathcal{I})^{\mathrm{op}}}$ has finitely many poles on $U(0;\varepsilon)$. Hence, $\mathrm{Sd}(\mathcal{J})^{\mathrm{op}}$ is of type extended $(L^2)$ if and only if it does not have a pole at $-1$ and it is equivalent to the existence of $\chi_{\sum}(\mathcal{I})$. Finally, we obtain 
\begin{eqnarray*}
\chi^{(2)}_{\mathrm{ex}}(\mathrm{Sd}(\mathcal{J})^{\mathrm{op}})&=& \frac{\text{sum}(\text{adj}(E-(Z_{\mathcal{I}}-E)(-1) ))}{\text{det}(E-(Z_{\mathcal{I}}-E)(-1) )} \\
&=&\chi_{\sum}(\mathcal{I}).
\end{eqnarray*}
\end{proof}
\end{them}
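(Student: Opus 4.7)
The plan is to exploit the projective resolution $P(\mathrm{Sd}(\mathcal{I}))_*$ constructed in Proposition~\ref{PR}, reducing the computation of $f^{(2)}_{\mathrm{Sd}(\mathcal{I})^{\mathrm{op}}}(z)$ to the generating series $\sum_n \#\overline{N_n}(\mathcal{I}) z^n$, and then recognize this series as a rational function that is already essentially the object defining $\chi_{\sum}(\mathcal{I})$.

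First I would recycle the computation from the proof of Theorem~\ref{moto-main}. Since $\mathrm{Sd}(\mathcal{I})$ is acyclic, so is $\mathrm{Sd}(\mathcal{I})^{\mathrm{op}}$, so every automorphism group is trivial and the tensor with $\mathcal{N}(\mathbf{f})$ is trivial. Applying the splitting functor $S_{\mathbf{f}}$ to $P(\mathrm{Sd}(\mathcal{I}))_*$ and invoking Lemma~\ref{AS} term by term kills every summand $P_{\mathbf{g}}$ with $\mathbf{g}\neq\mathbf{f}$, leaving a single copy of $\mathbb{C}$ concentrated in degree $L(\mathbf{f})$. Summing over all $\mathbf{f}\in\mathrm{Ob}(\mathrm{Sd}(\mathcal{I}))$ and extracting the degree-$n$ part gives
\[
h^{(2)}_n(\mathrm{Sd}(\mathcal{I})^{\mathrm{op}}) \;=\; \#\overline{N_n}(\mathcal{I}),
\]
so $f^{(2)}_{\mathrm{Sd}(\mathcal{I})^{\mathrm{op}}}(z)=\sum_{n\geq 0}\#\overline{N_n}(\mathcal{I})z^n=f_{\mathcal{I}}(z)$.

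Next I would verify that the radius of convergence $\rho$ is positive. Because $\mathcal{I}$ is finite, every non-degenerate $n$-chain corresponds to an $n$-step walk in the directed graph of non-identity morphisms, so $\#\overline{N_n}(\mathcal{I})=\mathrm{sum}\bigl((Z_{\mathcal{I}}-E)^n\bigr)\le\bigl(\mathrm{sum}(Z_{\mathcal{I}}-E)\bigr)^n$. This gives a geometric majorant, hence $\rho\ge 1/\mathrm{sum}(Z_{\mathcal{I}}-E)>0$. The first defining condition of type extended $(L^2)$ is therefore satisfied automatically.

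Now I would apply Theorem~2.2 of Leinster's series paper, which converts the matrix identity $\sum_{n\ge 0}(Z_{\mathcal{I}}-E)^n z^n=(E-(Z_{\mathcal{I}}-E)z)^{-1}$ into
\[
f^{(2)}_{\mathrm{Sd}(\mathcal{I})^{\mathrm{op}}}(z)\;=\;\frac{\mathrm{sum}\bigl(\mathrm{adj}(E-(Z_{\mathcal{I}}-E)z)\bigr)}{\det(E-(Z_{\mathcal{I}}-E)z)}.
\]
The right-hand side is a rational function whose poles are the finitely many zeros of $\det(E-(Z_{\mathcal{I}}-E)z)$. Choosing $\varepsilon>1$ larger than the absolute value of every pole $\neq -1$ shows that the four conditions defining type extended $(L^2)$ hold exactly when $-1$ is not a pole, that is, when $\det(E+(Z_{\mathcal{I}}-E))\neq 0$; by definition of $\chi_{\sum}$ this is precisely the condition that $\chi_{\sum}(\mathcal{I})$ exists. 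Evaluating the rational function at $z=-1$ gives the two Euler characteristics simultaneously, establishing the equality.

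The main obstacle is conceptual rather than technical: one must be careful that the uniqueness clause after the definition of $\chi^{(2)}_{\mathrm{ex}}$ actually applies, i.e., that the meromorphic extension $g$ determined on some $U(0;\varepsilon)$ agrees with the rational function everywhere it is defined. This is handled by analytic continuation from the disk of convergence $U(0;\rho)$, where both $g$ and the rational expression equal the convergent power series. Once this identification is in place, the biconditional and the numerical equality both drop out of the same formula.
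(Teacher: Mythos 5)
Your proposal is correct and follows essentially the same route as the paper: reuse the computation from the proof of Theorem \ref{moto-main} to get $f^{(2)}_{\mathrm{Sd}(\mathcal{I})^{\mathrm{op}}}(z)=\sum_n \#\overline{N_n}(\mathcal{I})z^n$, bound $\#\overline{N_n}(\mathcal{I})=\mathrm{sum}\bigl((Z_{\mathcal{I}}-E)^n\bigr)$ geometrically to get a positive radius of convergence, invoke Theorem 2.2 of \cite{Leib} for the rational expression, and reduce both existence and the value at $-1$ to the pole condition there. The only difference is cosmetic (your explicit remark on analytic continuation and the determinant criterion), so nothing further is needed.
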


\begin{rema}
We defined an extension of the $L^2$-Euler characteristic which turns out to be not invariant under equivalence of categories, since the series Euler characteristic is not. In Lemma 5.15 of \cite{FLS}, it was proven that the $L^2$-Euler characteristic is invariant under equivalence of categories for directly finite categories. 

\end{rema}

\end{document}